\newtheorem{theorem}{Theorem}
\newtheorem{lemm}{Lemma}
\newtheorem{predl}{Proposition}
\newtheorem{sled}{Corollary}
\theoremstyle{definition}
\newtheorem*{zam1}{Remark}
\newcommand{\W}{W_{\mathrm{reg}}}
\newcommand{\vol}{\mathrm{vol}}
\newcommand{\rk}{\mathop{\mathrm{rk}}}
\newcommand{\codim}{\mathop{\mathrm{codim}}}
\renewcommand{\Im}{\mathop{\mathrm{Im}}}
\newcommand{\ch}{\mathop{\mathrm{ch}}}
\begin{document}

\begin{flushleft}
UDK 512.817.72
\end{flushleft}

\vspace{10pt}

\begin{center}
\Large {\textsf{{On tilings defined by discrete reflection
groups}}}

\vspace{5pt}

\textsf{\large {P.\,V.\,Bibikov \footnote{The first author was partially supported by the Moebius Contest
Foundation for Young
Scientists}, V.\,S.\,Zhgoon\footnotemark}}
\end{center}

\footnotetext{The work of the second author is partially supported
by RFBR: 09-01-00287, 09-01-12169.}

\vspace{7pt}

\begin{quote}
The recent articles of Waldspurger and Meinrenken contained the
results of tilings formed by the sets of type $(1-w)C^\circ$, $w\in
W$, where $W$ is a linear or affine Weyl group, and $C^\circ$ is an
open  kernel of a fundamental chamber $C$ of the group  $W$. In this
article we generalize these results to cocompact hyperbolic
reflection groups. We also give more clear and simple proofs of the
Waldspurger and Meinrenken theorems.
\end{quote}

\section{Introduction}

Let $X$ be a simply-connected space of a constant curvature and $W$
be a cocompact discrete reflection group acting on this space. In a
spherical case $X\simeq \mathbb{S}^n\subset V$ is a $n$-dimensional
sphere embedded in $(n+1)$-dimensional Euclidian space $V$ with the
inner product $(\cdot, \cdot)$. In  Euclidian case  $X$ is a
$n$-dimensional affine Euclidian space,
  associated Euclidian vector space we denote by $V$.
In a hyperbolic case  $X\simeq \mathbb{H}^n\subset V$ is a connected
component of $n$-dimensional  hyperboloid embedded in
$(n+1)$-dimensional Minkowskii space with the inner product
$[x,y]=-x_0y_0+x_1y_1+\ldots+x_ny_n$ and defined by the equation
 $[x,x]=-1$. In the corresponding cases we call a group
$W$  spherical,  euclidian or  hyperbolic discrete reflection group.

The distance between points $x,y\in X$ is denoted by $\rho(x,y)$. By
$D$ denote a compact fundamental domain for the action of $W$ on $X$
and by $D^\circ$ denote its interior. In spherical an hyperbolic
cases let $C$ be a fundamental chamber for the group $W$ in the
space $V$ and $C^\circ$ is its interior. (We can assume that
$D=C\cap X$ and $D^\circ=C^\circ\cap X$.) Denote by $X^W$ the space
of $W$-fixed points (we note that $X^W$ is not empty only in the
spherical case) and by $W_x$ we denote a stabilizer of a point $x$ in the
group $W$. By $\W$ we denote a set of elements in  $W$, those fixed
point sets are equal to $X^W$.
 We denote by $s_H$ a reflection in a hyperplane $H$.

This paper is organized as follows. In
section~\ref{razd.lemm.nepodv.t.} we prove a fundamental ''Fixed
point lemma'' for cocompact discrete reflection groups. In
section~\ref{razd.razbieniya}  by means of this lemma we prove the
theorems of Waldspurger~\cite{Wald} and Meinrenken~\cite{Mein} of
tilings formed by the sets of type $(1-w)C^\circ$, where $w\in W$,
and we also prove a corresponding theorem for hyperbolic reflection groups. In the
last section~\ref{razd.svoy.razb.} we consider the properties of
these decompositions, we also relate them with the theorem of
Kostant~\cite{Cart}.

\textbf{Acknowledgements.} The authors are grateful to
E.\,B.\,Vinberg  for highly useful discussions and constant
attention to their work. We want to thank  V.\,L.\,Popov, who drew
our attention to paper~\cite{Mein}.

\section{Fixed point lemma}\label{razd.lemm.nepodv.t.}

\begin{lemm}\label{lemm.Dirihle}
For the points $x_0\in D^\circ$ and $x\in D$ we have inequality
$\rho(x_0,wx)\geqslant\rho(x_0,x)$, besides for every  $w\not\in
W_x$ the inequality is strict.
\end{lemm}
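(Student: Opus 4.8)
The plan is to recognize Lemma~\ref{lemm.Dirihle} as the assertion that the fundamental domain $D$ is the Dirichlet (Voronoi) cell of $x_0$ with respect to its orbit $Wx_0$, and to prove it by the classical ``closest point + reflection'' argument, which treats the spherical, euclidian and hyperbolic cases uniformly apart from one elementary sublemma.

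First I would isolate the only geometric input: for a hyperplane $H$ in $X$ and points $p,q\in X$ one has $\rho(p,s_Hq)<\rho(p,q)$ when $p$ and $q$ lie strictly on opposite sides of $H$, $\rho(p,s_Hq)>\rho(p,q)$ when they lie strictly on the same side, and $\rho(p,s_Hq)=\rho(p,q)$ when $p\in H$ or $q\in H$. This follows because in a space of constant curvature the locus $\{z:\rho(z,q)=\rho(z,s_Hq)\}$ is exactly $H$, and the component of its complement containing $q$ is exactly $\{z:\rho(z,q)<\rho(z,s_Hq)\}$; the only point requiring care is the spherical case, where $H$ is a great subsphere and $s_Hq$ may be antipodal to $q$, but there $\rho(z,s_Hq)=\pi-\rho(z,q)$ and the statement is immediate.

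Next, using $W$-invariance of $\rho$, I would rewrite $\rho(x_0,wx)=\rho(w^{-1}x_0,x)$, so that it suffices to show $x_0$ minimizes $v\mapsto\rho(vx_0,x)$ over $v\in W$, with the minimum attained at $v$ precisely when $v\in W_x$. Since $W$ acts properly discontinuously (being a cocompact discrete reflection group; in the spherical case it is finite), the orbit $Wx_0$ is discrete and closed, so $r:=\min_{v\in W}\rho(vx_0,x)$ is attained, say at $v=g_0$. I claim $x\in g_0D$: indeed $D$ is the intersection of the closed half-spaces bounded by its walls and lying on its side, so if $x\notin g_0D$ there is a wall $H'$ of the chamber $g_0D$ with $x$ strictly on the side opposite $g_0D$; since $g_0x_0\in(g_0D)^\circ$ lies strictly on the $g_0D$-side of $H'$, the sublemma gives $\rho(x,s_{H'}(g_0x_0))<\rho(x,g_0x_0)=r$, which contradicts minimality because $s_{H'}g_0\in W$. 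Hence $x\in D\cap g_0D$; as the chambers whose closure contains $x$ are exactly the $gD$ with $g\in W_x$ and $W$ acts simply transitively on chambers, $g_0\in W_x$, so $r=\rho(g_0x_0,x)=\rho(g_0x_0,g_0x)=\rho(x_0,x)$.

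Finally, for arbitrary $w\in W$ this gives $\rho(x_0,wx)=\rho(w^{-1}x_0,x)\geqslant r=\rho(x_0,x)$; equality forces $w^{-1}x_0$ to be a closest orbit point, whence the previous paragraph applied with $g_0:=w^{-1}$ yields $w^{-1}\in W_x$, i.e. $w\in W_x$. This proves both assertions. The step I expect to demand the most attention is the reflection sublemma in the spherical case, together with the routine but necessary structural facts that $D$ is cut out by the closed half-spaces of its walls and that stabilizers act transitively on the chambers at a point; all of this is standard in Vinberg's theory of discrete reflection groups. An alternative proof by induction on the length of $w$ (replacing $w$ by $ws_H$, where $H$ is the first wall of a minimal gallery from $D$ to $w^{-1}D$, and applying the opposite-sides sublemma) works equally well but involves more bookkeeping.
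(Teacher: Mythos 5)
Your proof is correct, but it follows a genuinely different route from the paper's. You argue globally: you characterize $D$ as the Dirichlet--Voronoi cell of $x_0$, take a point $g_0x_0$ of the orbit $Wx_0$ closest to $x$ (using discreteness of the orbit), and rule out $x\notin g_0D$ by reflecting in a separating wall of $g_0D$, which strictly decreases the distance; the equality case then falls out of the same minimization. The paper instead argues locally by induction on the length $\ell(w)$: it picks a wall $H$ of $wD$ with $\ell(s_Hw)<\ell(w)$, so that $H$ separates $D$ from $wD$, lets $x_H$ be the point where the geodesic segment $[x,wx_0]$ crosses $H$, and chains the induction hypothesis with the triangle inequality $\rho(x,x_0)\leqslant\rho(x,s_Hwx_0)\leqslant\rho(x,x_H)+\rho(x_H,s_Hwx_0)=\rho(x,wx_0)$, tracking the equality case through the triangle inequality. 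Your one-sided reflection sublemma (equidistant locus of $q$ and $s_Hq$ equals $H$, sign constant on components) and the paper's separating-wall-plus-triangle-inequality step are essentially the same geometric fact packaged differently. What each buys: your version avoids the induction and the gallery bookkeeping, treats the three geometries uniformly, and makes the equality analysis cleaner, at the cost of invoking slightly more structural theory (chambers as intersections of half-spaces, $D$ a strict fundamental domain, attainment of the minimum on a discrete orbit); the paper's version is more self-contained modulo the single Bourbaki citation relating $\ell(s_Hw)<\ell(w)$ to separation by $H$, and its inductive skeleton is reused verbatim in the proof of the fixed point lemma~\ref{lemm.nepodv.tochka}. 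You correctly flag the only delicate point in your approach, namely the spherical case of the reflection sublemma (antipodal points), and you even note the paper's induction as your alternative, so nothing is missing.
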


\begin{figure}[h]
\begin{center}
\includegraphics{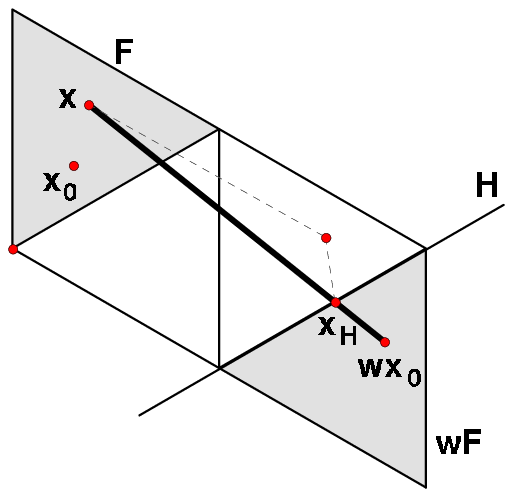}
\caption{}\label{ris.edin}%
\end{center}
\end{figure}

\begin{proof}
It is sufficient to prove the inequality
$\rho(wx_0,x)\geqslant\rho(x_0,x)$. The proof is by the induction on
the length $\ell(w)$ of the element $w$. We can find a wall $H$ of
codimension one in the chamber $wD$ such that $\ell(s_Hw)<\ell(w)$.
This means (cf.~\cite[теорема 1, гл. 5, пар. 3]{Burb}) that $D$ and
$wD$ lie in the different half spaces cut by the hyperplane $H$. Let us
denote by $x_H$  the intersection point of the hyperplane $H$ and a
closed interval connecting $x$ and $wx_0$ (Fig.~\ref{ris.edin}).
Then we have $\rho(x_H,wx_0)=\rho(x_H,s_H wx_0)$. By the induction
assumption and a triangle inequality we have
$$\rho(x,x_0)\leqslant\rho(x,s_Hwx_0) \leqslant \rho(x,x_H)+
\rho(x_H,s_Hwx_0)=\rho(x,wx_0).$$

A triangle inequality becomes an equality iff $x_H$ belongs to the
closed interval $[x,s_Hwx_0]$. This is possible only when $x=x_H$,
that implies $s_Hx=x$. Thus we have the equality
$\rho(x,x_0)=\rho(x,wx_0)$ only when we have the equalities
$\rho(x,s_Hwx_0)=\rho(x,x_0)$ and $s_Hx=x$. From the first equality
and induction step we get $s_Hwx=x$, taking into account  the second
equality we obtain $w\in W_x$.
\end{proof}

\begin{lemm}[Fixed point lemma]\label{lemm.nepodv.tochka}
Let $g\in \mathrm{Isom}(X)$. Then there exists a unique element
$w\in W$ such that the element  $w^{-1}g$ has a fixed point in
$D^\circ$.
\end{lemm}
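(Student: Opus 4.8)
The plan is to derive uniqueness from Lemma~\ref{lemm.Dirihle} and existence from Brouwer's fixed point theorem applied to the compact fundamental domain, the one subtle point being that the fixed point one produces must be pushed into the interior $D^\circ$.

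\emph{Uniqueness.} Suppose $w_1^{-1}g$ fixes $p_1\in D^\circ$ and $w_2^{-1}g$ fixes $p_2\in D^\circ$, so that $gp_i=w_ip_i$. Since $g$ is an isometry,
$$\rho(p_1,\,w_1^{-1}w_2\,p_2)=\rho(w_1p_1,\,w_2p_2)=\rho(gp_1,\,gp_2)=\rho(p_1,p_2).$$
Applying Lemma~\ref{lemm.Dirihle} with $x_0=p_1\in D^\circ$, $x=p_2\in D$ and the element $w_1^{-1}w_2$, this equality forces $w_1^{-1}w_2\in W_{p_2}$. But a point of $D^\circ$ lies on no reflection hyperplane, so $W_{p_2}=\{e\}$ and $w_1=w_2$.

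\emph{Existence.} Fix $x_0\in D^\circ$. It is standard (cf.~\cite{Burb}) that the closed chamber $D$ meets every $W$-orbit in exactly one point, so the ``folding'' map $r\colon X\to D$ sending a point to the unique representative of its orbit in $D$ is well defined; it is continuous, being the composite of the quotient map $X\to X/W$ with the homeomorphism $X/W\xrightarrow{\sim}D$ (a continuous bijection from a compact space to a Hausdorff one). As $D$ is compact and homeomorphic to a closed ball, the continuous self-map $\Phi:=r\circ g|_D\colon D\to D$ has a fixed point $x^*$ by Brouwer's theorem; then $gx^*\in Wx^*$, say $gx^*=wx^*$, i.e. $w^{-1}g$ fixes $x^*$.

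If $x^*\in D^\circ$ we are done; the remaining case $x^*\in\partial D$ is the crux. Then $x^*$ lies in the relative interior of a proper face $F$ of $D$, with finite stabilizer $\Gamma:=W_{x^*}$. Passing to the tangent space $V'=T_{x^*}X$: the element $h:=w^{-1}g$ fixes $x^*$, so its differential $\bar h$ lies in $O(V')$; $\Gamma$ acts on $V'$ as a finite linear reflection group whose chamber $\mathcal C$ is the tangent cone of $D$ at $x^*$; and for every $u\in\Gamma$ the element $(wu)^{-1}g=u^{-1}h$ again fixes $x^*$, with differential $\bar u^{-1}\bar h$. It therefore suffices to find $u\in\Gamma$ for which $\bar u^{-1}\bar h$ has a fixed vector lying in the open chamber $\mathcal C^\circ$: the fixed-point set of $u^{-1}h$ is then a totally geodesic submanifold through $x^*$ whose tangent space contains a vector pointing into $D^\circ$, so $u^{-1}h=(wu)^{-1}g$ fixes points of $D^\circ$ and we replace $w$ by $wu$. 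I expect this last assertion --- which is exactly the present lemma for the finite reflection group $\Gamma$ on the unit sphere of $V'$ --- to be the main obstacle; it should yield to induction on $\dim X$ (the case $\dim X=0$ being trivial), the delicate ingredient being to absorb the ``rotational part'' of $\bar h$ into $\Gamma$ so as to create a fixed direction inside $\mathcal C^\circ$, again via the reflection-group combinatorics underlying Lemma~\ref{lemm.Dirihle}.
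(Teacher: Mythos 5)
Your argument follows the paper's proof almost step for step: uniqueness via Lemma~\ref{lemm.Dirihle} exactly as in the paper, existence via Brouwer's theorem applied to the folding map $r\circ g|_D$ on the closed chamber (the paper's map $f(x)=w^{-1}gx$ is precisely this composite), and then a reduction to the stabilizer $W_{x^*}$ acting on the link of a boundary fixed point.

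The one place you stop short is exactly where no further work is needed. You correctly observe that the remaining assertion --- finding $u\in\Gamma=W_{x^*}$ so that $u^{-1}h$ fixes a direction in the open tangent cone $\mathcal C^\circ$ --- ``is exactly the present lemma for the finite reflection group $\Gamma$ on the unit sphere of $V'$'', but you then treat it as an unresolved obstacle requiring some delicate absorption of the rotational part of $\bar h$. It is not an obstacle: the lemma is stated for an arbitrary cocompact discrete reflection group on a simply connected space of constant curvature, and the finite group $\Gamma$ acting on the unit sphere of $T_{x^*}X$ with chamber the link of $x^*$ in $D$ is such a group, while $\bar h$ is an isometry of that sphere. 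So the statement you need \emph{is} the induction hypothesis in dimension $\dim X-1$, verbatim, and the existence half of your own argument (Brouwer on the closed spherical chamber, then recurse on the link of a boundary fixed point) applies there without modification. This is precisely how the paper closes the loop: it applies the induction hypothesis to the small sphere $S(x)$ around the boundary fixed point, the group $W_x$, the element $w^{-1}g$, and the chamber $S(x)\cap D$, and notes that the resulting interior fixed point of the spherical chamber lies in $D^\circ$ because $S(x)$ avoids all walls not through $x$. Once you replace ``I expect this should yield to induction'' by the observation that it literally \emph{is} the inductive instance of the lemma being proved, your proof is complete and coincides with the paper's.
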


\begin{proof}
1. The proof is by the induction on $\dim X$. First we prove the
existence of $w\in W$ such that $w^{-1}g$ has a fixed point in the
closed chamber $D$. Since the group  $W$ is cocompact from a
topological viewpoint the chamber $D$ is  a closed ball. Let $x\in
D$ be an arbitrary point.  Then the point  $gx$ belongs  to one of
the chambers $wD$, where $w\in W$. Let us set $f(x)=w^{-1}gx\in D$.
This defines a map $f$ from the ball $D$ into itself. It is clear
that it is well defined (since the chamber  $D$  is compact, the
chamber $gD$ intersects only finite set of chambers of type $wD$)
and continuous (indeed, if  $x_1,x_2\in D$ and $w_1x_1=w_2x_2$ then
$x_1=x_2$). Applying the Brauer fixed point  theorem we get that
this map has a fixed point. Thus we obtain an element $w\in W$ and a
point $x\in D$ such that $w^{-1}gx=x$.

For $x\in D^\circ$, there is nothing to prove. Let $x\in D\setminus
D^\circ$. Consider a sphere $S(x)$ with the center in the point $x$,
which doesn't intersect the hyperplanes spanned by the faces of $D$
that do not contain $x$. We also consider the subgroup $W_x\subset
W$ that acts on the sphere $S(x)$. The group $W_x$ is generated by
the reflections that fix the point $x$. The chamber $S(x)\cap D$ is
fundamental domain for the group $W_x$, since it is cut by those
hyperfaces of the chamber $D$ that contain $x$. By the induction
hypothesis applied  to the sphere $S(x)$, the element $w^{-1}g$, the
group $W_x$ and the chamber $S(x)\cap D$ there exists an element
$w'\in W_x$ and a point $x'\in (S(x)\cap D)^\circ$ such that
$w'^{-1}w^{-1}gx'=x'$. Since the sphere $S(x)$ does not intersect
the hyperplanes spanned by those faces of the chamber $D$ that do
not contain $x$ we have
 $x'\in D^\circ$.

2. Let us prove the uniqueness of $w$. Suppose there exist two
elements $w_1$, $w_2\in W$ such that $w_1^{-1}gx_1=x_1$ and
$w_2^{-1}gx_2=x_2$, where $x_1$, $x_2\in D^\circ$. Let us set
$w=w_1^{-1}w_2$, then we have
$\rho(gx_1,gx_2)=\rho(w_1x_1,w_1wx_2)$. If $w\neq 1$ taking into
account that $W_{x_i}=1$ we get the following inequalities from
Lemma~\ref{lemm.Dirihle}:
$$\rho(gx_1,gx_2)=\rho(x_1,x_2) < \rho(x_1,wx_2)=\rho(w_1x_1,w_1wx_2).$$
Thus we have $w=1$ and $w_1=w_2$.
\end{proof}

\begin{zam1}
In a more general case when we consider the fundamental domains of
finite volume (i.e. the chamber $D$  can have infinite points)
Lemma~\ref{lemm.nepodv.tochka} is not true. Indeed,
 consider a hyperbolic space $X$
and a parallel  transport $g$ along the line containing the infinite
point $o$  of the chamber $D$. Then there are no elements $w\in W$
such that $gx=wx$ for $x\in D^\circ$. Assume the contrary, then the
point $gx=wx$ is contained in the horosphere $\mathcal{O}$ with the
center in $o$ that also contains $x$, that is not true since
$g\mathcal{O}\cap\mathcal{O}=\varnothing$. Moreover one can prove
that the equality $gx=wx$ doesn't hold for all points $x\in D$.
\end{zam1}

\begin{sled}\label{sled.|g|<=1}
Let $g\colon X\to X$  be a continuous map.  Then there exists an element $w\in W$
such that the transformation $w^{-1}g$ has a fixed point in $D$. Moreover, if we have
$\rho(gx_1,gx_2)\leqslant\rho(x_1,x_2)$ for all $x_1,x_2\in X$ and the fixed point belongs
 to $D^\circ$ then such element $w$ is unique.
\end{sled}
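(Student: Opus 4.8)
The plan is to reduce both assertions to the two lemmas already proved, treating the continuous map $g$ exactly as the isometry $g$ was treated in the proof of Lemma~\ref{lemm.nepodv.tochka}. For the existence part, I would observe that the key topological step in that proof — constructing the map $f\colon D\to D$ by $f(x)=w^{-1}gx$ where $wD$ is a chamber containing $gx$, and applying Brouwer's fixed point theorem — used only continuity of $g$, not that $g$ is an isometry. Indeed, since $D$ is compact, $g(D)$ meets only finitely many chambers $wD$, so on the (closed) preimage of each such chamber the assignment $x\mapsto w^{-1}gx$ is continuous, and these pieces agree on overlaps because $w_1x_1=w_2x_2$ with $x_1,x_2\in D$ forces $x_1=x_2$ (and hence $w_1^{-1}g x = w_2^{-1}g x$ there). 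Thus $f$ is a well-defined continuous self-map of the ball $D$, and Brouwer gives a fixed point $x\in D$ with $w^{-1}gx=x$. This already yields the first claim; note that, unlike in Lemma~\ref{lemm.nepodv.tochka}, there is no induction-on-dimension step pushing the fixed point into $D^\circ$, since for a general continuous $g$ a fixed point on the boundary need not exist in the interior.

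For the uniqueness part, I would repeat verbatim the argument from step~2 of the proof of Lemma~\ref{lemm.nepodv.tochka}, the only change being that the chain of equalities $\rho(gx_1,gx_2)=\rho(x_1,x_2)$ coming from $g$ being an isometry is replaced by the inequality $\rho(gx_1,gx_2)\leqslant\rho(x_1,x_2)$ supplied by hypothesis. Concretely: suppose $w_1^{-1}gx_1=x_1$ and $w_2^{-1}gx_2=x_2$ with $x_1,x_2\in D^\circ$, set $w=w_1^{-1}w_2$, and assume $w\neq 1$. Since $x_1,x_2\in D^\circ$ their stabilizers are trivial, so $w\notin W_{x_2}$ and Lemma~\ref{lemm.Dirihle} gives the strict inequality $\rho(x_1,wx_2)>\rho(x_1,x_2)$. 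On the other hand, using $w_1^{-1}gx_1=x_1$, $w_2^{-1}gx_2=x_2$, and that $W$ acts by isometries,
\[
\rho(x_1,wx_2)=\rho(w_1x_1,w_1wx_2)=\rho(gx_1,gx_2)\leqslant\rho(x_1,x_2),
\]
a contradiction. Hence $w=1$ and $w_1=w_2$.

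The only point requiring a little care — and the place I expect to be the main obstacle — is verifying that $f$ is genuinely well-defined and continuous when $gx$ happens to lie on a wall, i.e. in more than one chamber $wD$: one must check that the different choices of $w$ lead to the same value $w^{-1}gx\in D$. This follows because if $gx\in w_1D\cap w_2D$ then $w_1^{-1}gx$ and $w_2^{-1}gx$ both lie in $D$ and are $W$-translates of the same point, hence (by the injectivity remark $w_1x_1=w_2x_2,\ x_1,x_2\in D\Rightarrow x_1=x_2$) they coincide; continuity then follows by the usual pasting of finitely many closed pieces. Everything else is a direct transcription of the proofs of Lemmas~\ref{lemm.Dirihle} and~\ref{lemm.nepodv.tochka}, with "isometry" weakened to "continuous" (for existence) and "distance-nonincreasing" (for uniqueness).
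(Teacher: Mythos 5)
Your proposal is correct and follows exactly the route the paper takes: the paper's own proof is a two-line reference stating that the first claim follows from part~1 of the proof of Lemma~\ref{lemm.nepodv.tochka} (the Brouwer argument, which indeed uses only continuity of $g$) and the second from Lemma~\ref{lemm.Dirihle} together with part~2 of that proof (with the isometry equality weakened to the distance-nonincreasing inequality). You have simply written out in full the details that the paper leaves implicit, including the correct observation that the induction step pushing the fixed point into $D^\circ$ is unavailable for a general continuous map.
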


\begin{proof}
The first claim follows from the proof of part 1 of ``Fixed point
lemma''~\ref{lemm.nepodv.tochka}. The second follows from
Lemma~\ref{lemm.Dirihle} and the proof of second part of  ``Fixed
point lemma''.
\end{proof}


\section{Tilings related to discrete reflection groups}\label{razd.razbieniya}

In the spherical and hyperbolic cases let us define $\Pi$  as a set
of unite normals to the hyperfaces of the chamber $C$ lying in the
same halfspaces as $C$. For every $e\in\Pi$  denote by $H_e$ a
hyperface orthogonal to $e$ and by $s_e$ a reflection in $H_e$. By
$C^*$  we denote the cone dual to $C$ with respect to the
corresponding inner product.

In a hyperbolic case we consider the cone $\mathcal{K}=\{x\in
V
:[x,x]\leqslant 0\}=\mathcal{K}_+\cup \mathcal{K}_-$,  where
$\mathcal{K}_+=\{x=(x_0,x_1,\ldots,x_n)\in\mathcal{K}:x_0\geqslant0\}$.
Let us recall that in this case $[e,e]>0$ for all $e\in\Pi$, we
also have the inclusions $C\subseteq \mathcal{K}_+$ and
$\mathcal{K}_-\subset C^*$ (cf.~\cite[ex. 12--13 to $\S$4 ch.
5]{Burb}).

\begin{lemm}\label{sled.(1-w)C<C^*}
For  spherical or  hyperbolic reflection group $W$ we have the inclusion $$C^*\supseteq \bigsqcup\limits_{w\in
W}(1-~w)C^\circ.$$
\end{lemm}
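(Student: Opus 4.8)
The plan is to establish the two assertions packed into the symbol $\bigsqcup$: first that $(1-w)C^\circ\subseteq C^*$ for every $w\in W$, and second that the sets $(1-w)C^\circ$ for distinct $w$ are pairwise disjoint. Throughout I write $\langle\cdot,\cdot\rangle$ for the ambient form ($(\cdot,\cdot)$ in the spherical case, $[\cdot,\cdot]$ in the hyperbolic case), normalized so that $\langle e,e\rangle=1$ for $e\in\Pi$; recall that then $C=\bigcap_{e\in\Pi}\{x:\langle x,e\rangle\geqslant 0\}$, that $C^\circ=\bigcap_{e\in\Pi}\{x:\langle x,e\rangle>0\}$ (so in particular $0\notin C^\circ$), and that $C^*$ is a closed convex cone, hence $C^*+C^*\subseteq C^*$ and $\lambda C^*\subseteq C^*$ for $\lambda\geqslant 0$.

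For the inclusion I would induct on $\ell(w)$. For $w=1$ one has $(1-w)C^\circ=\{0\}\subseteq C^*$. For $\ell(w)\geqslant 1$ pick a right descent: write $w=w's_e$ with $e\in\Pi$ and $\ell(w')=\ell(w)-1$. Using $s_ex=x-2\langle x,e\rangle e$ one gets the identity
$$(1-w)x=(1-w')x+2\langle x,e\rangle\,w'e .$$
Here $(1-w')x\in C^*$ by the inductive hypothesis, and $\langle x,e\rangle>0$ for $x\in C^\circ$. It remains to check $w'e\in C^*$, i.e. that $\langle w'e,z\rangle\geqslant 0$ for all $z\in C$, equivalently that $w'^{-1}C$ lies in the closed half-space $\{y:\langle y,e\rangle\geqslant 0\}$ containing $C$, i.e. that $H_e$ does not separate $C$ from $w'^{-1}C$. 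Since $\ell(s_ew'^{-1})=\ell(w's_e)=\ell(w)>\ell(w')=\ell(w'^{-1})$, this is precisely the length criterion of \cite[Ch.~V]{Burb} already invoked in the proof of Lemma~\ref{lemm.Dirihle}. Hence $(1-w)x\in C^*+C^*=C^*$.

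For disjointness, suppose $(1-w)x=(1-w')x'$ with $x,x'\in C^\circ$; I claim $w=w'$. Since a positive rescaling sends any point of $C^\circ$ (in particular a nonzero one) to a point of $C^\circ\cap X=D^\circ$, write $x=r\hat x$, $x'=r'\hat x'$ with $r,r'>0$ and $\hat x,\hat x'\in D^\circ\subseteq X$. By linearity $r\hat x-rw\hat x=r'\hat x'-r'w'\hat x'$; taking $\langle\cdot,\cdot\rangle$ of each side with itself and using that $w,w'$ preserve the form, all squared-norm terms cancel and one is left with $\langle\hat x,\hat x'\rangle=\langle w\hat x,w'\hat x'\rangle=\langle\hat x,w^{-1}w'\hat x'\rangle$. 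Since $\langle\hat x,\hat y\rangle$ is a strictly monotone function of $\rho(\hat x,\hat y)$ for points of $X$ (it equals $\cos\rho$, resp. $-\cosh\rho$), this gives $\rho(\hat x,\hat x')=\rho\bigl(\hat x,(w^{-1}w')\hat x'\bigr)$. Applying Lemma~\ref{lemm.Dirihle} with $x_0=\hat x\in D^\circ$, the point $\hat x'\in D$, and the element $w^{-1}w'$, the equality forces $w^{-1}w'\in W_{\hat x'}$; but $\hat x'\in D^\circ$ has trivial stabilizer, so $w^{-1}w'=1$.

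The identity for $(1-w)x$ and the cancellation of norms are routine. The one genuinely load-bearing point in the inclusion is the positivity $w'e\in C^*$ when $\ell(w's_e)>\ell(w')$ — the dictionary between the combinatorial length function and the geometry of the reflecting half-spaces — but this is exactly the ingredient already used in Lemma~\ref{lemm.Dirihle}, so no new work is required. The genuinely new idea is in the disjointness step: recognizing that an equality $x-wx=x'-w'x'$ of vectors in $V$ is, after normalization onto $X$, equivalent to an equality of distances to which the Dirichlet-type inequality of Lemma~\ref{lemm.Dirihle} applies directly.
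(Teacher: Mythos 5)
Your proof is correct, but it takes a noticeably different route from the paper's. For the inclusion $(1-w)C^\circ\subseteq C^*$ the paper argues in one line from Lemma~\ref{lemm.Dirihle}: the inequality $\rho(wx_0,x)\geqslant\rho(x_0,x)$ for $x_0\in D^\circ$, $x\in D$, translated through $\cos\rho=(x_0,x)/\sqrt{(x_0,x_0)(x,x)}$ (resp. $\ch\rho=-[x_0,x]/\sqrt{[x_0,x_0][x,x]}$), gives $(x,(1-w)x_0)\geqslant 0$ directly. Your induction on $\ell(w)$ via the identity $(1-w)x=(1-w')x+2\langle x,e\rangle\,w'e$ together with the positivity $w'e\in C^*$ is a legitimate alternative; it is purely linear-algebraic (no metric on $X$ is needed for this half) and is in fact the same mechanism the paper uses later, in the proof of Theorem~\ref{th.Neevkl.}, to control the sign of $[(1-w)x,(1-w)x]$ by induction on length. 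What the metric argument buys is brevity; what yours buys is independence from the distance formulas. For the disjointness, the paper's proof of this lemma is silent (disjointness is extracted later from the uniqueness clause of the Fixed point lemma), whereas you prove it here by running the translation backwards: from $(1-w)x=(1-w')x'$ you deduce an equality of distances on $X$ and invoke the strictness clause of Lemma~\ref{lemm.Dirihle} together with the triviality of stabilizers of points of $D^\circ$; this is sound and essentially reproduces the uniqueness computation in part 2 of Lemma~\ref{lemm.nepodv.tochka}. One small imprecision: pairing each side of $r\hat x-rw\hat x=r'\hat x'-r'w'\hat x'$ with itself does \emph{not} cancel to $\langle\hat x,\hat x'\rangle=\langle w\hat x,w'\hat x'\rangle$; you must first rearrange to $r\hat x-r'\hat x'=rw\hat x-r'w'\hat x'$ and then take the scalar square (exactly the manipulation in the paper's proof of Lemma~\ref{lemm.dim.ker}). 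This is a one-line fix, not a gap.
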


\begin{proof}
By  Lemma~\ref{lemm.Dirihle} we have $\rho(wx_0,x)\geqslant\rho(x_0,x)$ where
$x_0\in C^\circ$ and $x\in C$. In the case of spherical reflection group
$W$ we have:
$$\frac{(wx_0,x)}{\sqrt{(wx_0,wx_0)(x,x)}}=\cos\rho(wx_0,x)\leqslant
\cos\rho(x_0,x)=\frac{(x_0,x)}{\sqrt{(x_0,x_0)(x,x)}},$$  that implies
that $(x,(1-w)x_0)\geqslant0$. In the case of  hyperbolic group $W$ the argument is similar (we have to remind the following
formula for the distance in $\mathbb{H}^n$:
$\ch\rho(x_0,x)=-\frac{[x_0,x]}{\sqrt{[x_0,x_0][x,x]}}$).
\end{proof}

\begin{theorem}[Waldspurger {\cite{Wald}}]\label{th.Wald.}
Let $W$ be a spherical reflection group. We have a decomposition
$C^*=\bigsqcup\limits_{w\in W}(1-w)C^\circ.$
\end{theorem}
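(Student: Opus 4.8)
The plan is to prove the reverse inclusion to Lemma~\ref{sled.(1-w)C<C^*} and, simultaneously, disjointness of the pieces; Lemma~\ref{sled.(1-w)C<C^*} already gives $\bigsqcup_{w\in W}(1-w)C^\circ\subseteq C^*$, so what remains is to show every point of $C^*$ lies in some $(1-w)C^\circ$ and that a point cannot lie in two of them. The engine for both is the Fixed point lemma (Lemma~\ref{lemm.nepodv.tochka}), applied not on the sphere $X$ directly but to a suitable isometry built from a point of $C^*$. The key device: given $v\in C^*$, I want to produce an element $g\in\mathrm{Isom}(\mathbb S^n)=O(n+1)$ (or better, use Corollary~\ref{sled.|g|<=1} with a contraction) whose ``defect'' $x\mapsto (1-g)x$ recovers $v$ on the fundamental chamber.

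Concretely, here is the approach I would carry out. Fix $v\in C^*$, so $(v,x)\geqslant 0$ for all $x\in C$. I would like to find $x_0\in C^\circ$ and $w\in W$ with $(1-w)x_0=v$, i.e. $wx_0=x_0-v$. The natural candidate is to consider the map sending $x_0$ to $x_0-v$; since we are on the sphere this must be corrected, but the cleanest route is to invoke Corollary~\ref{sled.|g|<=1}: one checks that an appropriate affine/linear map $g$ associated to $v$ is distance-nonincreasing on $X$ (a translation by $-v$ followed by radial renormalization is a contraction toward... — this is where the spherical geometry enters and must be checked), so by the Corollary there is a unique $w\in W$ with $w^{-1}g$ fixing a point $x_0\in D^\circ=C^\circ\cap X$. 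Unwinding, $gx_0=wx_0$, and the construction of $g$ is arranged precisely so that $gx_0=x_0-v$ (up to the scaling, which one removes by working with the cone $C^*$ rather than its trace on the sphere, using that $C^*$ is a cone and the decomposition is scale-invariant after projecting). Hence $v=(1-w)x_0\in(1-w)C^\circ$, giving surjectivity.

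For disjointness: suppose $(1-w_1)x_1=(1-w_2)x_2=v$ with $x_1,x_2\in C^\circ$. Then the same map $g$ reconstructed from $v$ satisfies $gx_i=w_ix_i$ for $i=1,2$, i.e. $w_i^{-1}g$ fixes $x_i\in C^\circ$; the uniqueness clause of Corollary~\ref{sled.|g|<=1} (equivalently of Lemma~\ref{lemm.nepodv.tochka}) forces $w_1=w_2$, and then $x_1=x_2$. So the union is disjoint, and combined with Lemma~\ref{sled.(1-w)C<C^*} we get the claimed equality $C^*=\bigsqcup_{w\in W}(1-w)C^\circ$.

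I expect the main obstacle to be the geometric bridge in the second paragraph: correctly defining the auxiliary map $g$ on the sphere $\mathbb S^n$ from a vector $v\in C^*$ so that (a) $g$ is genuinely distance-nonincreasing on $\mathbb S^n$ (so Corollary~\ref{sled.|g|<=1} applies), and (b) the fixed-point equation $gx_0=wx_0$ translates back to the \emph{linear} relation $(1-w)x_0=v$ despite the normalization intrinsic to working on the sphere. One likely has to pass between the cone picture (where $(1-w)$ is linear and $C^*$ is genuinely a cone) and the spherical picture (where the Fixed point lemma lives), checking that nothing is lost under projectivization; verifying the contraction property will use the inequality $(v,x)\geqslant 0$ on $C$ together with $\cos$ being decreasing, much as in the proof of Lemma~\ref{sled.(1-w)C<C^*}. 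Everything else is then a formal consequence of the two lemmas already proved.
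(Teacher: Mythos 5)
Your overall architecture is the right one (deduce the missing inclusion and disjointness from the Fixed point lemma applied to an isometry manufactured from $u\in C^*$), but the step you yourself flag as ``the main obstacle'' --- choosing the auxiliary map $g$ --- is exactly the content of the proof, and the candidate you propose does not work. The map ``translate by $-v$ and renormalize'', $x\mapsto (x-v)/\|x-v\|$, is \emph{not} distance-nonincreasing on the sphere (already on $\mathbb S^1$ with $v=(\varepsilon,0)$ the induced map on angles has derivative $\tfrac{1-\varepsilon\cos\theta}{1-2\varepsilon\cos\theta+\varepsilon^2}>1$ near $\theta=0$), so Corollary~\ref{sled.|g|<=1} does not apply to it; and even granting a fixed point $wx_0=(x_0-v)/\|x_0-v\|$, the resulting vector $(1-w)x_0=x_0-\tfrac{x_0-v}{\|x_0-v\|}$ is a combination of $x_0$ and $v$, not a multiple of $v$, so the renormalization cannot be undone by rescaling inside the cone. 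Requirements (a) and (b) of your second paragraph both fail for this $g$.

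The missing idea is to take $g=s_u$, the reflection in the hyperplane orthogonal to $u$. This is an honest element of $O(n+1)$, so the Fixed point lemma~\ref{lemm.nepodv.tochka} applies directly (no contraction argument, and the fixed point lands in $D^\circ$, not merely in $D$), and the fixed-point equation $wx=s_ux=x-\tfrac{2(u,x)}{(u,u)}u$ yields $(1-w)x=\tfrac{2(u,x)}{(u,u)}u$, which \emph{is} a positive multiple of $u$ because $(u,x)>0$ for $x\in C^\circ$, $0\ne u\in C^*$. Since $C^\circ$ is a cone, rescaling $x$ to $v=\tfrac{(u,u)}{2(u,x)}x\in C^\circ$ gives $(1-w)v=u$ exactly; the same rescaling trick converts any relation $u=(1-w')v'$ with $v'\in C^\circ$ into a fixed point of $w'^{-1}s_u$ in $D^\circ$, so the uniqueness clause of the Fixed point lemma delivers the disjointness, as in your third paragraph. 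So your reduction to the two lemmas is sound, but without identifying $g=s_u$ the proof is not complete.
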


\begin{proof}
By ``Fixed point lemma''~\ref{lemm.nepodv.tochka} applied to the element $g=s_u$,
 where $u\in C^*$,  there exists a point $x\in
C^\circ$  and a unique element $w\in W$ such that $w^{-1}s_ux=x$.
Thus we have $(1-w)x=\frac{2(u,x)}{(u,u)}u$. Setting
$v=\frac{(u,u)}{2(u,x)}x$, we get the equality $(1-w)v=u$.
 Since $x\in C^\circ$ and $(u,x)>0$ we get $v\in C^\circ$. Taking into account
Lemma~\ref{sled.(1-w)C<C^*} this means that
$\bigsqcup\limits_{w\in W}(1-w)C^\circ=C^*$.
\end{proof}

\begin{theorem}
Let $W$ be a Euclidian reflection group. Then for every $h\in
\mathrm{Isom}(X)$ we have an equality $V=\bigsqcup\limits_{w\in
W}(h-w)D^\circ$.
\end{theorem}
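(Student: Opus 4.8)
The plan is to deduce the statement directly from the ``Fixed point lemma''~\ref{lemm.nepodv.tochka}. Here $(h-w)D^\circ$ is understood as the subset $\{hx-wx:x\in D^\circ\}\subset V$, where the difference of two points of the affine space $X$ is taken in the associated vector space $V$. The decisive observation is that for a fixed vector $u\in V$ the transformation $g_u\colon X\to X$ defined by $g_u(x)=hx-u$ — that is, $h$ followed by the translation by $-u$ — is again an isometry of $X$, so Lemma~\ref{lemm.nepodv.tochka}, which is stated for all of $\mathrm{Isom}(X)$ including the Euclidean case, applies to it verbatim.

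For the covering part $V=\bigcup_{w\in W}(h-w)D^\circ$, I would fix $u\in V$ and apply Lemma~\ref{lemm.nepodv.tochka} to $g_u$: this produces a (unique) $w\in W$ and a point $x\in D^\circ$ with $w^{-1}g_u(x)=x$, i.e.\ $wx=g_u(x)=hx-u$. Rearranging yields $u=hx-wx=(h-w)x\in(h-w)D^\circ$. For disjointness, suppose $u\in(h-w_1)D^\circ\cap(h-w_2)D^\circ$, so that there are $x_1,x_2\in D^\circ$ with $u=hx_i-w_ix_i$ for $i=1,2$. Then $w_ix_i=hx_i-u=g_u(x_i)$, so $w_i^{-1}g_u$ fixes $x_i\in D^\circ$; the uniqueness clause of Lemma~\ref{lemm.nepodv.tochka} now forces $w_1=w_2$. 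Together these two steps give $V=\bigsqcup_{w\in W}(h-w)D^\circ$.

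I do not expect a serious obstacle once the Fixed point lemma is in hand; the only points needing care are the verification that $t_{-u}\circ h$ is genuinely an isometry of the affine space $X$ (so the lemma can legitimately be invoked), and the fact — already built into the uniqueness part of the lemma, since interior points of a fundamental domain have trivial stabilizer — that the element $w$ attached to $g_u$ is unique. Note also that, in contrast with the spherical case, no a priori containment statement analogous to Lemma~\ref{sled.(1-w)C<C^*} is required: both inclusions here come straight out of Lemma~\ref{lemm.nepodv.tochka}.
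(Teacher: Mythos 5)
Your proof is correct and follows essentially the same route as the paper: both reduce the statement to the Fixed point lemma applied to the isometry $t_{-u}\circ h$, reading off $u=(h-w)x$ from the fixed-point equation and getting disjointness from the uniqueness of $w$. (The paper formally cites Corollary~\ref{sled.|g|<=1} rather than Lemma~\ref{lemm.nepodv.tochka} itself, but since $h$ is an isometry your direct appeal to the lemma is the same argument and even makes the membership $x\in D^\circ$ more explicit.)
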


\begin{proof} By Corollary~\ref{sled.|g|<=1} applied to the element
$g=t_{-u}h$ (where $t_{-u}$ is a parallel transport by the vector
$-u$) there exists a point $v\in D$ and a unique element $w\in W$
such that $w^{-1}t_{-u}hv=v$. This is equivalent  to $(h-w)v=u$.
\end{proof}

\begin{sled}[Meinrenken, {\cite[Thm.2]{Mein}}]\label{th.Mein.}
There is an equality $V=\bigsqcup\limits_{w\in
W}(1-w)D^\circ$.
\end{sled}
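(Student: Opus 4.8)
The plan is to derive this corollary directly from the preceding theorem by specializing the isometry $h$ to the identity. Indeed, the theorem just proved asserts that for a Euclidian reflection group $W$ and \emph{every} $h\in\mathrm{Isom}(X)$ one has the disjoint decomposition $V=\bigsqcup_{w\in W}(h-w)D^\circ$. The statement of Meinrenken's theorem is precisely the case $h=1$, so the proof consists of one line: apply the theorem with $h=\mathrm{id}$.

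The only point that requires a word of care is that the expression $(h-w)D^\circ$ in the theorem makes sense because $h$ and $w$ act on the \emph{affine} space $X$, yet their difference $(h-w)v=hv-wv$ is a \emph{vector} in $V$; when $h=1$ this reads $(1-w)v=v-wv\in V$, which is exactly the set appearing in the corollary. So I would simply note that $t_{-u}\cdot\mathrm{id}=t_{-u}$ is itself just the parallel transport by $-u$, and the element $g$ used in the proof of the theorem becomes $g=t_{-u}$; Corollary~\ref{sled.|g|<=1} then supplies a point $v\in D$ and a unique $w\in W$ with $w^{-1}t_{-u}v=v$, i.e. $v-wv=u$, which gives both the covering $V=\bigcup_{w\in W}(1-w)D^\circ$ and, via the uniqueness clause, the disjointness.

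There is essentially no obstacle here: the work has all been done in the theorem, and the corollary is a pure specialization. If one wanted to be maximally terse, the proof is just the sentence ``Set $h=1$ in the previous theorem.'' I will write it at that level of brevity.

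\begin{proof}
Apply the previous theorem to the isometry $h=\mathrm{id}$. Since $t_{-u}\cdot\mathrm{id}=t_{-u}$, Corollary~\ref{sled.|g|<=1} (applied to $g=t_{-u}$) yields a point $v\in D$ and a unique element $w\in W$ with $w^{-1}t_{-u}v=v$, that is, $(1-w)v=u$. Hence $V=\bigsqcup_{w\in W}(1-w)D^\circ$.
\end{proof}
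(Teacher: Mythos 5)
Your proposal is correct and matches the paper's intent exactly: the corollary is stated immediately after the theorem with no separate proof, precisely because it is the specialization $h=\mathrm{id}$, with disjointness coming from the uniqueness clause of Corollary~\ref{sled.|g|<=1} just as you say. Nothing further is needed.
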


\begin{sled}\label{sled.(h-w)A}
For every $h\in GA(X)$  there is an equality $V=\bigcup\limits_{w\in
W}(h-w)D$, moreover if $\rho(hx_1,hx_2)\leqslant\rho(x_1,x_2)$ for
all $x_1,x_2\in X$ the chamber $(h-w)D$ have  pairwise intersections
of codimension at least 1.
\end{sled}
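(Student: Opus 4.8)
The plan is to derive both assertions from Corollary~\ref{sled.|g|<=1} by essentially the same mechanism used in the Euclidean theorem, only keeping track of the closed chamber $D$ instead of its interior. For the covering statement, fix $u\in V$ and apply the first part of Corollary~\ref{sled.|g|<=1} to the continuous map $g=t_{-u}h$, where $t_{-u}$ denotes translation by $-u$ (this makes sense since $h\in GA(X)$ is affine, so $t_{-u}h$ is again a continuous self-map of $X\simeq V$). We obtain $w\in W$ and a point $v\in D$ with $w^{-1}t_{-u}hv=v$, which rearranges to $(h-w)v=u$. Hence $u\in(h-w)D$, and since $u$ was arbitrary this gives $V=\bigcup_{w\in W}(h-w)D$.

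For the second assertion, assume $\rho(hx_1,hx_2)\leqslant\rho(x_1,x_2)$ for all $x_1,x_2\in X$. The claim is that for $w_1\neq w_2$ the set $(h-w_1)D\cap(h-w_2)D$ has no interior point, i.e.\ contains no point of $(h-w_1)D^\circ\cap(h-w_2)D^\circ$; equivalently, the open sets $(h-w_i)D^\circ$ are pairwise disjoint. Suppose $u\in(h-w_1)D^\circ\cap(h-w_2)D^\circ$, so $u=(h-w_1)v_1=(h-w_2)v_2$ with $v_1,v_2\in D^\circ$. Rewriting, $g=t_{-u}h$ satisfies $w_1^{-1}gv_1=v_1$ and $w_2^{-1}gv_2=v_2$, so $g$ has two fixed points $v_1,v_2$ in $D^\circ$ with respect to the two group elements $w_1,w_2$. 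The map $g=t_{-u}h$ is distance-nonincreasing because $h$ is and translations are isometries, so the uniqueness clause of Corollary~\ref{sled.|g|<=1} forces $w_1=w_2$, a contradiction. Therefore the interiors are disjoint and the chambers $(h-w)D$ overlap only in sets of codimension at least $1$.

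The one point that needs a little care — and which I expect to be the main (minor) obstacle — is the justification that the difference $(h-w)D$ is a genuine closed chamber whose interior is $(h-w)D^\circ$, i.e.\ that the affine map $x\mapsto (h-w)x = hx - wx$ behaves well on the topological ball $D$. Since $h$ and $w$ are both affine maps of $V$, the map $x\mapsto hx-wx$ is affine; one must check it is nondegenerate (injective) on $V$ under the stated hypotheses, so that it carries the closed ball $D$ to a closed set with interior the image of $D^\circ$ — and injectivity on $D^\circ$ is exactly what the uniqueness argument above already secures. Once this identification $(h-w)D^\circ = \bigl((h-w)D\bigr)^\circ$ is in place, the pairwise-intersection statement is immediate from disjointness of the interiors, and nothing further is required.
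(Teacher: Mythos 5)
Your proposal is correct and follows essentially the same route as the paper: apply Corollary~\ref{sled.|g|<=1} to $g=t_{-u}h$ to get $(h-w)v=u$ with $v\in D$, and invoke the uniqueness clause when $v\in D^\circ$ to see that the sets $(h-w)D^\circ$ are pairwise disjoint, so any common point of two chambers comes from $\partial D$ and the intersection has codimension at least~$1$. Your closing worry about nondegeneracy of $x\mapsto hx-wx$ is superfluous: if that affine map is degenerate the whole image $(h-w)D$ already has codimension at least~$1$, so the conclusion holds trivially in that case.
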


\begin{proof}
By Corollary~\ref{sled.|g|<=1} applied to the element $g=t_{-u}h$
there exists a point $v\in D$ and an element $w\in W$ such that
 $w^{-1}t_{-u}hv=v$. Thus we have $(h-w)v=u$. Moreover the element $w$
is unique if  $v\in D^\circ$, that implies the second assertion.
\end{proof}

\begin{zam1}
Consider the spherical reflection subgroup $W_0\subset W$. From
Theorem~\ref{th.Mein.} we get that the set $M=\bigsqcup\limits_{w\in
W_0}(1-w)D^\circ$ is a fundamental domain for the subgroup of $W$,
generated by the parallel transports correspond to the simple
coroots. The set $N=\bigcup\limits_{w\in W_0}wD$ is the closure of
another fundamental domain for the subgroup of parallel transports
in consideration. Thus we have $\sum\limits_{w\in
W_0}\det(1-w)\cdot\vol(D)=\vol(M)=\vol(N)=|W_0|\cdot\vol(D)$, that
implies the formula
 $\sum\limits_{w\in W_0}\det(1-w)=|W_0|$
(cf.~\cite[ex.3,\S2,ch.5]{Burb}). By the same argument one
obtains $\sum\limits_{w\in W_0}\det(1-hw^{-1})=|W_0|$, where
$\rho(hx_1,hx_2)\leqslant\rho(x_1,x_2)$ for all $x_1,x_2\in X$.
\end{zam1}

\begin{theorem}\label{th.Neevkl.}
Let $W$  be a hyperbolic reflection group. We have the following equalities:
(i) $C^*\setminus\mathcal{K}_-=\bigsqcup\limits_{w\in W}(1-w)C^\circ$
(ii) $\mathcal{K}_-^\circ=\bigsqcup\limits_{w\in W}(-1-w)C^\circ$.
\end{theorem}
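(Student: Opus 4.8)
The plan is to imitate the proof of Theorem~\ref{th.Wald.}, feeding ``Fixed point lemma''~\ref{lemm.nepodv.tochka} two suitably chosen isometries of $X\simeq\mathbb H^n$. Two preliminary observations will be used throughout. First, since $W$ is cocompact the compact chamber $D=C\cap X$ cannot reach the light cone, so $C\setminus\{0\}\subset\mathcal K_+^{\circ}$; in particular every $v\in C^{\circ}$ is future timelike, and its normalization $\tilde v=v/\sqrt{-[v,v]}$ lies in $D^{\circ}\subset\mathbb H^n$. Second, since every $y\in C\setminus\{0\}$ is future timelike one has $[u,y]<0$ for every nonzero future causal $u$, so no such $u$ lies in $C^{*}$; combined with $\mathcal K_-\subset C^{*}$ this shows that $C^{*}\setminus\mathcal K_-$ is precisely the set of spacelike vectors of $C^{*}$, and (as $C$ spans $V$) that $[u,x]>0$ for every $u\in C^{*}\setminus\{0\}$ and $x\in C^{\circ}$.

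The two isometries I will use are: for a spacelike $u$, the Minkowski reflection $s_u(x)=x-\tfrac{2[u,x]}{[u,u]}u$, which restricts to a hyperbolic reflection of $X$; and, for a future timelike $p$ normalized by $[p,p]=-1$, the map $-s_p$, which fixes $p$, acts as $-\mathrm{id}$ on $T_p\mathbb H^n=p^{\perp}$ and preserves $\mathcal K_+$, hence is the geodesic point symmetry of $X$ at $p$. For part~(i): given $u\in C^{*}\setminus\mathcal K_-$ (which is spacelike by the remark above), I apply Lemma~\ref{lemm.nepodv.tochka} to $g=s_u$ to get a unique $w\in W$ and $x\in D^{\circ}$ with $s_ux=wx$; then $(1-w)x=\tfrac{2[u,x]}{[u,u]}u$ is a positive multiple of $u$, and a positive rescaling of $x$ inside the cone $C^{\circ}$ yields $v\in C^{\circ}$ with $(1-w)v=u$. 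For the reverse inclusion, Lemma~\ref{sled.(1-w)C<C^*} gives $(1-w)C^{\circ}\subseteq C^{*}$, while the $\ch$-formula yields $[(1-w)\tilde v,(1-w)\tilde v]=2(\ch\rho(\tilde v,w\tilde v)-1)\geqslant0$, so $(1-w)v$ is spacelike whenever $w\neq1$; thus $(1-w)C^{\circ}\subseteq C^{*}\setminus\mathcal K_-$ for $w\neq1$, the set $(1-1)C^{\circ}=\{0\}$ being the only (measure-zero, boundary) exception. For disjointness, if $(1-w_1)v_1=(1-w_2)v_2=u\neq0$ then, exactly as in Theorem~\ref{th.Wald.} and using that each $w_i$ is an isometry (so that $[(1-w_i)\tilde v_i,(1-w_i)\tilde v_i]=2[(1-w_i)\tilde v_i,\tilde v_i]$), the normalization $\tilde v_i$ is forced to be a fixed point of $w_i^{-1}s_u$ in $D^{\circ}$, whence $w_1=w_2$ by the uniqueness in Lemma~\ref{lemm.nepodv.tochka}.

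For part~(ii) I argue symmetrically. Given $u\in\mathcal K_-^{\circ}$, the vector $-u$ is future timelike; let $p$ be its normalization and apply Lemma~\ref{lemm.nepodv.tochka} to $g=-s_p$, obtaining a unique $w\in W$ and $x\in D^{\circ}$ with $-s_px=wx$. Then $(-1-w)x=-x-wx=2[p,x]p$, which is a positive multiple of $-p$ (since $[p,x]<0$ for future timelike $p,x$), hence of $u$, and rescaling $x$ in $C^{\circ}$ gives $v\in C^{\circ}$ with $(-1-w)v=u$. Conversely $(-1-w)v=-(v+wv)$ with $v,wv\in\mathcal K_+^{\circ}$; since $\mathcal K_+^{\circ}$ is an open convex cone, $v+wv\in\mathcal K_+^{\circ}$, hence $(-1-w)v\in\mathcal K_-^{\circ}$. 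Disjointness follows as in part~(i), now using the identity $[(-1-w)\tilde v,(-1-w)\tilde v]=-2[(-1-w)\tilde v,\tilde v]$ to see that $\tilde v_i$ is a fixed point of $w_i^{-1}(-s_p)$.

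The only genuinely new ingredient compared with Theorem~\ref{th.Wald.} is spotting the right isometries, and I expect the point symmetry $-s_p$ to be the crux: it is what pushes the relevant region into the past cone and produces the exponent $-1-w$ rather than $1-w$. The remaining care is routine bookkeeping — matching the $W$-invariant fixed-point equation to $(\pm1-w)v=u$ on the nose after rescaling, and keeping track of the strict versus non-strict parts of the light cone (e.g.\ the harmless single point $0$ appearing in~(i)).
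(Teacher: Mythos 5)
Your proof is correct, and its engine is the same as the paper's: apply the Fixed point lemma to $g=s_u$ for part (i) and to the point symmetry $g=-s_u$ (your $-s_p$; the same map, since $s_p=s_u$) for part (ii), read off $(\pm1-w)v=u$ after a positive rescaling of the fixed point, and get disjointness from the uniqueness clause of the lemma exactly as in Theorem~\ref{th.Wald.}. Where you genuinely diverge is the ``forward'' inclusion $\bigsqcup_{w}(\pm1-w)C^\circ\subseteq C^*\setminus\mathcal{K}_-$ (resp.\ $\subseteq\mathcal{K}_-^\circ$): the paper proves $[(1-w)x,(1-w)x]>0$ (resp.\ $<0$) by induction on $\ell(w)$, writing $w=s_ew'$ and using the identity $[(1-w)x,(1-w)x]=[(1-w')x,(1-w')x]+4[w'x,e][x,e]/[e,e]$, whereas you compute directly $[(1-w)\tilde v,(1-w)\tilde v]=2(\ch\rho(\tilde v,w\tilde v)-1)>0$ for $w\neq1$, and in (ii) simply invoke convexity of the open future cone to conclude $-(v+wv)\in\mathcal{K}_-^\circ$. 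Your route is shorter and makes explicit two points the paper glosses over: that $C^*\setminus\mathcal{K}_-$ consists only of spacelike vectors (needed both to justify $[u,u]>0$ and for $s_u$ to be a hyperbolic isometry in the first place), and the harmless anomaly of the single point $0=(1-1)C^\circ$ sitting on the boundary of $\mathcal{K}_-$. The paper's induction buys a statement purely at the level of the reflection-group combinatorics, without appealing to the normalization and the distance formula on $\mathbb{H}^n$; both arguments are valid.
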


\begin{proof}
$(i)$ First let us prove the inclusion <<$\supseteq$>>. By
Lemma~\ref{sled.(1-w)C<C^*} we have
$C^*\supseteq\bigsqcup\limits_{w\in W}(1-w)C^\circ$. Let us prove
that $\bigsqcup\limits_{w\in
W}(1-w)C^\circ\not\subset\mathcal{K}_-$. The proof is by induction
on the length  $\ell$ of a reduced decomposition of $w$.
 The case $\ell(w)=0$ is obvious. If
$\ell(w)>0$ we have an equality $w=s_ew'$, where $\ell(w')=\ell(w)-1$.
The latter means that the cones
 $C$ and $w'C$ lie in the same halfspace of $H_e$, and in particular $[w'x,e]>0$.
We obtain
$$[(1-w)x,(1-w)x]=[(1-w')x, (1-w')x]+4\frac{[w'x,e][x,e]}{[e,e]}>0$$
for all $x\in C^\circ$. This proves that
$C^*\setminus\mathcal{K}_-\supseteq\bigsqcup\limits_{w\in
W}(1-w)C^\circ$.

Let us prove the opposite inclusion. Consider $u\in C^*\setminus \mathcal{K}_-$.
Let us apply ``Fixed point lemma''~\ref{lemm.nepodv.tochka} to the element $g=s_u$.
 We obtain: $wx=s_ux=x-2\frac{[u,x]}{[u,u]}u$,
hence $(1-w)x=2\frac{[u,x]}{[u,u]}u$. Setting
$v=\frac{[u,u]}{2[u,x]}x$ we get $(1-w)v=u$. It is obvious that
$[u,x]>0$, since $u\not\in \mathcal{K}_-$ then $[u,u]>0$. Thus
$v\in C^\circ$ that proves the desired decomposition.

$(ii)$ First let us prove the inclusion <<$\supseteq$>>. As
previously we argue by the induction on the length $\ell$  of the
shortest decomposition of $w$. When $\ell(w)=0$ we are done. If
$\ell(w)>0$ we have $w=s_ew'$, where $\ell(w')=\ell(w)-1$ and
$$[(-1-w)x,(-1-w)x]=[(-1-w')x,
(-1-w')x]-4\frac{[w'x,e][x,e]}{[e,e]}<0$$ for all $x\in C^\circ$.
Besides it is obvious that $(-1-w)x\not\in \mathcal{K}_+$ for $x\in
C^\circ$. This implies
$\mathcal{K}_-^\circ\supseteq\bigsqcup\limits_{w\in
W}(-1-w)C^\circ$.

Let us prove the opposite inclusion. Consider $u\in \mathcal{K}_-^\circ$.
This time  applying ``Fixed point lemma''~\ref{lemm.nepodv.tochka} to the element $g=-s_u$,
we get: $wx=-s_ux=-x+\frac{2[u,x]}{[u,u]}u$,
thus  $(-1-w)x=-\frac{2[u,x]}{[u,u]}u$. Setting
$v=-\frac{[u,u]}{2[u,x]}x$, we obtain $(-1-w)v=u$. We have
$[u,x]>0$, and since $u\in \mathcal{K}_-^\circ$, we have $[u,u]<0$. This implies
$v\in C^\circ$ and proves the second decomposition.
\end{proof}

\begin{zam1}
Theorem~\ref{th.Neevkl.} cannot be generalized directly to the
discrete hyperbolic groups with a fundamental domain of finite
volume. We shall construct a  point  $u\in
C^*\setminus\mathcal{K}_-$ such that the equality  $u=(1-w)v$  is
impossible for all $v\in C^\circ$ (and even for $v\in C$). Consider
$u\in (C^*\setminus\mathcal{K}_-)\cap \langle s_eo,o\rangle$, where
$e\in w^{''}\Pi$,  $w^{''}\in W$ and $o\in C$  is an infinite point,
moreover $H_u\cap C=\varnothing$. Let us prove that the equality
$u=(1-w)v$ is impossible for every $v\in C^\circ$.

Indeed there is $\lambda>0$ such that $\lambda s_es_uo=o$. Since
$H_u\cap C=\varnothing$, then $\lambda<1$. This implies that if
$\mathcal{O}$ is a horosphere  with the center in $o$, then
$s_es_u\mathcal{O}$ lies inside the horosphere $\mathcal{O}$. Thus
we get that $u=(\lambda-w')p$, where $w'=s_e$ and
$p=\frac{[u,u]}{2[u,o]}o\in C$. Assume that there exist $w\in W$ and
$v\in C^\circ$ such that $u=(1-w)v$. Then we get
$(\lambda-w')p=(1-w)v$, which implies $\lambda p-v=w'p-wv$. Taking
the norm of this equality we obtain $(v,(\lambda-\tilde{w})p)=0$,
where $\tilde{w}=w^{-1}w'$. But we have
$(\lambda-\tilde{w})p=(1-\tilde{w})p-(1-\lambda)p\in C^*$ and the
equality $2(v,(\lambda-\tilde{w})p)=0$ is possible  only if
$(\lambda-\tilde{w})p=0$. But this is impossible due to limit
argument. Indeed consider the sequence $\{p_n\}\subset C^\circ$ such
that $p_n\to p$. If $x\in C^\circ$ then
$0>[p_n,x]\geqslant[\tilde{w}p_n,x]$ and
$\lambda|[p_n,x]|<|[p_n,x]|\leqslant|[\tilde{w}p_n,x]|$. Thus
$|[\lambda p_n,x]|<|[p_n,x]|\leqslant|[\tilde{w}p_n,x]|$. Taking the
limit as $n\to\infty$, we obtain $|[\lambda
p,x]|<|[p,x]|\leqslant|[\tilde{w}p,x]|$, that contradicts the
assumptions.

Thus we found a subset in
$C^*\setminus\mathcal{K}_-$  that is not covered by the cones of type
$(1-w)C^\circ$. The similar arguments also hold for the decomposition of $\mathcal{K}_-$.
\end{zam1}

\section{Properties of decompositions}\label{razd.svoy.razb.}

In this paragraph we restrict ourselves to the case of finite
reflection groups acting on Euclidian space $V$. In the preceding
paragraphs we obtained various theorems about decompositions formed
by the sets of type $(1-w)C^\circ$, where $w\in W$.  One of the main
questions related with these decompositions is the question which
cones of type $C_w=(1-w)C$ are \emph{adjacent}.

For the case when  $w$, $w'\in\W$, the answer was given
in~\cite{BiZh} (Figures ~\ref{ris.A_3} and \ref{ris.B_3} describe
the sections of corresponding cones in the
 cases $W=A_3$ and $W=B_3$).  In case of finite reflection groups this question is related to the
 original proof of Walspurger theorem~\ref{th.Wald.} in~\cite{Wald}. The main idea of the proof is to construct from
 an element $w\in
\W$ and a vector $e\in\Pi$ the element $w'\in\W$ such that the cones $C_w$
and $C_{w'}$ are adjacent and have intersection of codimension one contained in the  wall
$(1-w)H_e$.

\begin{figure}[h]
\begin{center}
\parbox{8cm}{%
\includegraphics{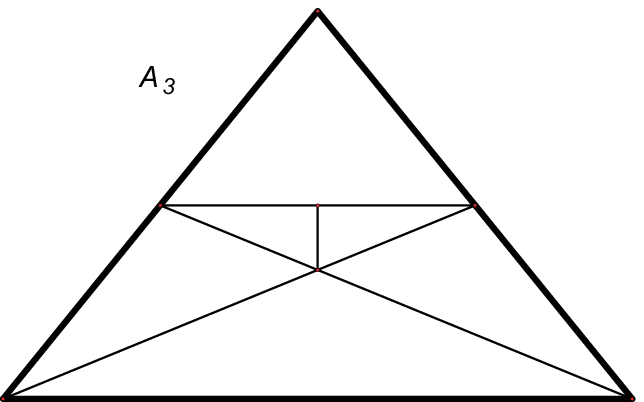}
\caption{}\label{ris.A_3}%
}\qquad
\parbox{8cm}{%
\includegraphics{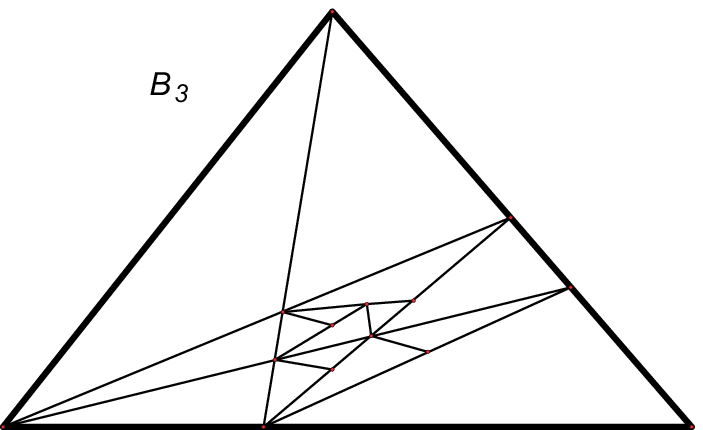}
\caption{}\label{ris.B_3}%
}
\end{center}
\end{figure}

Before giving the answer to this question let us state two simple
facts.

\begin{predl}\label{predl.ws_e><w}
Let $g\in O(V)$ be an arbitrary orthogonal transformation and let $s\in O(V)$ be a reflection.
 Then the dimensions of fixed point sets of  $g$ and
$gs$ differ exactly by 1.
\end{predl}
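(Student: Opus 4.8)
The plan is to reduce everything to a statement about the rank of the operator $1-g$ versus $1-gs$. Let $s = s_H$ be the reflection in a hyperplane $H$, and write $V = H \oplus \mathbb{R}v$ where $v$ is a normal vector to $H$, so that $s$ acts as $-1$ on $\mathbb{R}v$ and as the identity on $H$. The fixed point set of an orthogonal transformation $g$ is $\ker(1-g)$, so its dimension is $\dim V - \rk(1-g)$. Hence the claim is equivalent to the assertion that $\rk(1-g)$ and $\rk(1-gs)$ differ by exactly $1$.

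First I would observe that $1 - gs = (1-g) + g(1-s)$, and that $1-s$ is the rank-one operator $x \mapsto \frac{2(x,v)}{(v,v)}v$, so $g(1-s)$ is a rank-one operator with image spanned by $gv$. Thus $1-gs$ differs from $1-g$ by a rank-one perturbation, which already shows $|\rk(1-g) - \rk(1-gs)| \le 1$; the only thing to rule out is that the two ranks are equal. To rule this out, I would use orthogonality: since $g$ is orthogonal, $g$ and $gs$ are both normal operators, so $V = \ker(1-g)\oplus \Im(1-g)$ and likewise $V = \ker(1-gs)\oplus\Im(1-gs)$, with the summands orthogonal. A clean way to see the parity obstruction is via determinants of the restrictions: on the orthogonal complement of its fixed space, an orthogonal transformation $h$ has no eigenvalue $1$, and $\det(h|_{\ker(1-h)^\perp})$ equals $\det(h)$ up to the sign contributed by the fixed part (which is $+1$). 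Since $\det(gs) = -\det(g)$, the transformations $g$ and $gs$ have determinants of opposite sign, so the dimensions of the subspaces on which they act without fixed vectors — namely $\rk(1-g)$ and $\rk(1-gs)$ — cannot both have the same value; more precisely they must differ by an odd number, and combined with the rank-one bound $|\rk(1-g)-\rk(1-gs)|\le 1$ this forces the difference to be exactly $1$.

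Alternatively, and perhaps more transparently, I would split into two complementary cases according to whether $v \in \Im(1-g)$ or not, equivalently (by orthogonality and normality of $g$) whether $v \perp \ker(1-g)$ or $v \in \ker(1-g)$. If $v$ is fixed by $g$, then on $\mathbb{R}v$ the map $1-gs$ acts as $1-(-1) = 2 \ne 0$ while $1-g$ acts as $0$, and on $H \cap \text{(suitable complement)}$ the two agree; one checks the rank goes up by one. If instead $v \notin \ker(1-g)$, a short computation with the decomposition $V = \ker(1-g) \oplus \Im(1-g)$ shows the new kernel $\ker(1-gs)$ strictly contains a line not in $\ker(1-g)$ accounted for by the perturbation, and the rank drops by one. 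The main obstacle is bookkeeping: making the case analysis airtight requires carefully tracking how the rank-one perturbation interacts with the kernel, and the slick determinant/parity argument is the surest way to nail down that the difference is exactly $1$ rather than merely at most $1$.
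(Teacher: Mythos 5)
Your proof is correct, and its decisive step is the same as the paper's: from $\det(gs)=-\det(g)$ and the fact that an orthogonal transformation $h$ satisfies $\det h=(-1)^{\rk(1-h)}$ (complex eigenvalues pair off, so only the $-1$-eigenspace contributes a sign), the dimensions of the two fixed spaces must differ by an odd number. Where you genuinely diverge is in capping the difference at $1$: the paper assumes without loss of generality that $\dim\ker(1-g)>\dim\ker(1-gs)$ and uses the containment $\ker(1-g)\cap H\subseteq\ker(1-gs)$, which loses at most one dimension; you instead write $1-gs=(1-g)+g(1-s)$, note that $g(1-s)$ has rank one, and conclude $|\rk(1-g)-\rk(1-gs)|\leqslant 1$ by subadditivity of rank. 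Your version is symmetric in $g$ and $gs$ and so dispenses with the ``without loss of generality'' step, and it makes no use of orthogonality in that half of the argument (only the parity half needs it); the paper's version is marginally more elementary in that it never invokes rank inequalities. Your second, ``alternative'' case analysis is the only loosely written part, but since you explicitly fall back on the determinant argument to close the gap, nothing essential is missing.
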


\begin{proof}
The eigenvalues' norm of orthogonal transformation $g$ is equal to
1, and each complex eigenvalue has a corresponding conjugated
eigenvalue (counted with multiplicities). Thus we have $\det
g=(-1)^{n-k}$, where $k=\dim \ker(1-g)$. Since $\det s=-1$ we get
$\det g=-\det(gs)$. This implies that $\dim \ker(1-g)\neq \dim
\ker(1-gs)$.

Without the loss of generality assume that $\dim \ker(1-g)> \dim \ker(1-gs)$. On the other hand
 $\ker(1-g)\cap H \subset \ker(1-gs)$ (where $H$
is the reflection hyperplane) that implies $\ker(1-g)-1\leqslant \ker(1-gs)<
\ker (1-g)$ and $\ker(1-gs)=\ker(1-g)-1$.
\end{proof}

\begin{predl}
If $wC\cap w'C\neq\{0\}$, then $C_w\cap C_{w'}\neq\{0\}$.
\end{predl}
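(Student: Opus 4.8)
The plan is to work with the linear map $1-w$ on $V$ and exploit the inclusion relations between the cones $C$, $wC$ and $w'C$ together with the fact that a finite reflection group acts simply transitively on its chambers. Let $x\in wC\cap w'C$ be a nonzero vector. Since $x\in wC$ we may write $x=wy$ with $y\in C$, and since $x\in w'C$ we may write $x=w'z$ with $z\in C$. The element $u=w^{-1}w'\in W$ then satisfies $uy'=z'$ for the corresponding preimages; more precisely, $w^{-1}x=y\in C$ and $w'^{-1}x=z\in C$ with $wy=w'z=x$. The idea is to produce a nonzero vector in $C_w\cap C_{w'}=(1-w)C\cap(1-w')C$ out of this common vector $x$.

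First I would try the naive candidate: note that $(1-w)y = y - wy = y - x$ and $(1-w')z = z - w'z = z - x$. These need not coincide, so the direct approach is to look instead at a suitable point on the ray through $x$. Since $C$ is a cone, $ty\in C$ and $tz\in C$ for all $t\ge 0$, so $(1-w)(ty)=t(y-x)$ and $(1-w')(tz)=t(z-x)$ all lie in $C_w$ and $C_{w'}$ respectively. The real point is that $x$ itself, being fixed by neither $w$ nor $w'$ in general, can be realized through the fixed-point machinery: one applies the ``Fixed point lemma'' (Lemma~\ref{lemm.nepodv.tochka}), or rather its mechanism as used in the proof of Waldspurger's theorem~\ref{th.Wald.}, to the reflection $s_x$ in the hyperplane orthogonal to $x$. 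That produces a chamber $wC$ with a vector $v\in C^\circ$ such that $(1-w)v$ is a positive multiple of $x$; the subtlety here is that this gives one particular $w$, not an arbitrary one in the intersection, so the clean argument is the geometric one below.

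The cleanest route, and the one I would write up, is this. Take $0\neq x\in wC\cap w'C$. Both $w$ and $w'$ are isometries, so $\|wa\|=\|a\|$ for all $a$. Consider the segment of arguments: because $w^{-1}x\in C$ and the identity chamber $C$ and $wC$ are on the same side issue, one shows $(1-w)$ maps a ray inside $C$ to a ray landing in a neighborhood of $x$ scaled appropriately — concretely, pick the point where the ray $\mathbb{R}_{\ge 0}\,x$ meets the image. Since $C^*\supseteq\bigsqcup_w(1-w)C^\circ$ by Lemma~\ref{sled.(1-w)C<C^*} and, in the finite case, these fill $C^*$ (Theorem~\ref{th.Wald.}), the ray $\mathbb{R}_{>0}\,x$ meets $C^*$ (one checks $x\in C^*$ since $x\in wC\subseteq$ the union of chambers and pairing with any $e\in\Pi$ stays nonnegative on the relevant piece — this needs the hypothesis $wC\cap w'C\ne\{0\}$ to guarantee $x$ actually lies in $C^*$), hence $x=(1-v)p$ for some $v\in W$, $p\in C^\circ$. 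It then remains to identify that $v$ can be taken to be both $w$ and $w'$, i.e. that $x\in C_w$ and $x\in C_{w'}$ simultaneously, using that $w^{-1}x,w'^{-1}x\in C$.

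The step I expect to be the main obstacle is precisely this last identification: showing that the \emph{same} nonzero vector lies in both $C_w=(1-w)C$ and $C_{w'}=(1-w')C$, rather than merely that each cone is nontrivial. I anticipate this forces one to track carefully which chamber the fixed point lands in — the length-induction bookkeeping of Lemma~\ref{lemm.Dirihle}, applied to control on which side of each wall $H_e$ the images $wp$ and $w'p$ sit — and to use that $w^{-1}x$ and $w'^{-1}x$ both lie in the \emph{closed} chamber $C$. Once that is in hand, scaling $x$ by a positive constant to land in the image of $C^\circ$ versus only $C$ is routine, and the conclusion $C_w\cap C_{w'}\ne\{0\}$ follows immediately.
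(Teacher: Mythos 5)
There is a genuine gap, and it comes from dismissing the correct one-line argument. You write $x=wy$ with $y\in C$ and $x=w'z$ with $z\in C$, consider the candidates $(1-w)y=y-x$ and $(1-w')z=z-x$, and then say ``these need not coincide.'' But they do coincide: since $C$ is a fundamental chamber (fundamental domain for $W$ on $V$), the equality $wy=w'z$ means $y$ and $z$ are two points of $C$ lying in the same $W$-orbit, hence $y=z$. Then $(1-w)y=y-wy=z-w'z=(1-w')z$ is a common element of $C_w$ and $C_{w'}$, and it is nonzero for $w\in\W$ (its image under the relevant projection is nonzero since $\ker(1-w)$ is trivial there); this is exactly the paper's proof. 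The entire content of the proposition is the fundamental-domain property $wy=w'z,\ y,z\in C\Rightarrow y=z$, which you had in your hands and discarded.

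The alternative route you sketch instead does not close and, worse, cannot close as stated. You propose to realize a multiple of $x$ as $(1-v)p$ with $p\in C^\circ$ via the fixed point lemma and Waldspurger's theorem, and then to ``identify that $v$ can be taken to be both $w$ and $w'$.'' But Waldspurger's decomposition $C^*=\bigsqcup_{v\in W}(1-v)C^\circ$ is \emph{disjoint}: a nonzero vector lies in $(1-v)C^\circ$ for at most one $v$, so for $w\neq w'$ no vector of $C_w\cap C_{w'}$ can come from the open pieces. The common vector necessarily arises from a boundary point of $C$ (indeed $wy=w'y$ with $y\in C^\circ$ would force $w=w'$ since $W_y=1$), which is precisely why the argument must go through the closed chamber and the orbit property rather than through the open tiling. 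The claim that $x\in C^*$ is also unjustified and unnecessary. You correctly flagged the ``last identification'' as the main obstacle; it is not an obstacle to be overcome by length-induction bookkeeping, but a sign that the approach should be replaced by the direct fundamental-domain argument.
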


\begin{proof}
Indeed the equality  $wx=w'x'$ for $x,x'\in C\setminus\{0\}$ implies that
$x=x'$ and $(1-w)x=(1-w')x'$.
\end{proof}

For each  $e\in\Pi$ denote by $\pi_e$ the vector orthogonal to
$V^W$ such that its inner product with  $f$ is equal to Kroneker symbol
$\delta_{ef}$ for all $f\in\Pi$. Let us fix an element $w\in\W$, for each
vector $u\in V$  we denote by $v_u$ the vector orthogonal to
$V^W$ such that $(w^{-1}-1)v_u=u$.

\begin{lemm}[cf.~{\cite[Lemma 3]{BiZh}}]\label{svoystva}
Let $u,r\in V$ and $e\in \Pi$. For vectors $v_{u},v_{r}$ and $v_{e}$  we have the following properties:
1) $(v_{u},r)+(v_{r},u)=-(u,r)$; 2) $(v_{u},u)=-\frac 1 2(u,u)$; 3)
$v_{u}=ws_{u}v_{u}$; 4) $v_u\bot (1-w)H_u$;  5)
$\ker(1-ws_u)=\langle v_u\rangle$, 6) $(v_e,(1-w)\pi_e)<0$;.
\end{lemm}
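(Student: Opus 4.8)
The plan is to prove the six identities in Lemma~\ref{svoystva} essentially by direct linear-algebra computation, using the defining relation $(w^{-1}-1)v_u=u$ together with orthogonality of $w$ and of all the auxiliary vectors to $V^W$, and then to feed properties 1)--2) into the verification of 3)--6). Throughout, I would work in the orthogonal complement $(V^W)^\perp$, on which $1-w$ is invertible (since $w$ is elliptic with fixed space exactly $V^W$ by $w\in\W$), so that $v_u$ is well defined and depends linearly on $u$.

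First I would establish 1). Writing $a=v_u$, $b=v_r$, we have $(w^{-1}-1)a=u$ and $(w^{-1}-1)b=r$. Then $(w^{-1}-1)a=w^{-1}a-a$, and using $(w^{-1}x,y)=(x,wy)$ one computes $(u,r)=((w^{-1}-1)a,r)=(a,(w-1)r)$; but $(w-1)r=(w-1)(w^{-1}-1)b=(w^{-1}+w-2)b$ wait — more cleanly, note $(w-1)(w^{-1}-1)=2-w-w^{-1}$, which is symmetric, so $(u,r)=(a,(2-w-w^{-1})b)$. On the other hand $(v_u,r)=(a,(w^{-1}-1)b)=((w-1)a,b)$ and $(v_r,u)=(b,(w^{-1}-1)a)=((w-1)b,a)$, whose sum is $(a,(w^{-1}+w-2)b)=-(a,(2-w-w^{-1})b)=-(u,r)$. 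That is 1); putting $r=u$ gives $2(v_u,u)=-(u,u)$, which is 2).

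Next, 3) and 4): since $s_u$ is the reflection in $H_u=u^\perp$, $s_u v_u = v_u - \frac{2(v_u,u)}{(u,u)}u = v_u + u$ by 2). Now $w s_u v_u = w(v_u+u)=wv_u + wu$. From $(w^{-1}-1)v_u=u$ we get $v_u = w(u+v_u)$, i.e. $w v_u + wu = v_u$; hence $w s_u v_u = v_u$, which is 3). For 4), the wall $(1-w)H_u$ is the image under $1-w$ of $u^\perp$; a vector $(1-w)h$ with $h\perp u$ satisfies $(v_u,(1-w)h)=((1-w^{-1})v_u,h)\cdot(\pm1)$ — precisely $((1-w)^{T}v_u,h)=((1-w^{-1})v_u,h)=(-u,h)=0$, so $v_u\perp(1-w)H_u$, which is 4). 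Property 5) follows because $ws_u$ fixes $v_u\neq0$ by 3), while by Proposition~\ref{predl.ws_e><w} its fixed space has dimension $\dim V^W+1$ (one more than that of $w$), and since $v_u\perp V^W$ one checks $\langle V^W, v_u\rangle$ is exactly this fixed space, so $\ker(1-ws_u)\cap(V^W)^\perp=\langle v_u\rangle$; if one wants the literal statement $\ker(1-ws_u)=\langle v_u\rangle$ one should read it inside $(V^W)^\perp$, which is the ambient space of the lemma. Finally, for 6) I would compute $(v_e,(1-w)\pi_e)$: since $\pi_e\perp V^W$ and $(v_e,(1-w)\pi_e)=((1-w^{-1})v_e,\pi_e)=(-e,\pi_e)=-1<0$, using the defining property $(\pi_e,f)=\delta_{ef}$ for all $f\in\Pi$, in particular $(\pi_e,e)=1$.

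The routine but slightly delicate part is keeping track of whether one works with $w$ or $w^{-1}$ and with which orthogonal adjoint, since the vector $v_u$ is defined through $w^{-1}-1$ rather than $1-w$; a sign error there propagates through all six parts. The only genuinely non-computational input is the identification in 5) of the fixed space of $ws_u$, where Proposition~\ref{predl.ws_e><w} supplies the dimension count and one must additionally argue that $v_u$ together with $V^W$ spans it — this is immediate once one observes $w$ acts trivially on $V^W$ and $s_u$ maps $V^W$ into itself only if $V^W\subset u^\perp$, which need not hold, so more carefully: $ws_u$ has a $(\dim V^W+1)$-dimensional fixed space containing the line $\langle v_u\rangle$, and since $v_u\notin V^W$ is forced by $v_u\perp V^W$ and $v_u\neq0$ (as $(w^{-1}-1)v_u=u\neq0$ when $u\neq0$), the claim inside $(V^W)^\perp$ follows by dimension. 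I expect this dimension bookkeeping, rather than any single hard idea, to be the main obstacle.
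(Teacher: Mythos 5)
Your proof is correct and follows essentially the same route as the paper: everything is derived by direct computation from the defining relation $w^{-1}v_u=v_u+u$ together with orthogonality of $w$ (the paper proves 1) by pairing the two defining equalities, derives 3) from 2), and declares 4)--6) evident, which you spell out correctly, including the careful dimension count for 5) via Proposition~\ref{predl.ws_e><w}).
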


\begin{proof}
1) Taking inner product of the equalities   $w^{-1}v_u=v_u+u$ and $w^{-1}v_r=v_r+r$ we obtain
$(v_u,v_r)=(w^{-1}v_u,w^{-1}v_r)=(v_u,v_r)+(v_u,r)+(v_r,u)+(u,r)$, that proves the equality.
 3) From 2)
we get that $w^{-1}v_u=v_u+u=v_u-\frac{2(v_u,u)}{(u,u)}u=s_uv_u$.
The proofs of other claims are evident.
\end{proof}

\begin{theorem}[cf.~{\cite[Theorem 2]{BiZh}}]
Let $w\in\W$. The element  $w'\in W$  is regular and the cones  $C_w$ and
$C_{w'}$ are adjacent  iff there exist  vectors $e,f\in\Pi$ such that $w'=ws_es_f$ and $(v_e,f)>0$; in this case
 $\codim (1-w)H_e\cap(1-w')H_f=1$.
\end{theorem}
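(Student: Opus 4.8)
The plan is to analyze when two regular cones $C_w$ and $C_{w'}$ share a facet, using the fixed-point machinery together with the combinatorial structure of $\W$. First I would observe that, since $w\in\W$, the cone $C_w=(1-w)C$ is $n$-dimensional (because $\ker(1-w)=V^W$ and $(1-w)$ is injective on the orthogonal complement), and its walls are exactly the hyperplanes $(1-w)H_e$ for $e\in\Pi$; here $(1-w)H_e$ is a hyperplane of $\Im(1-w)$, which coincides with the whole of $V$ in the regular case only if $V^W=0$, so I must be careful to work inside $\Im(1-w)$, or equivalently to use $v_e$ as the normal vector to $(1-w)H_e$ as in Lemma~\ref{svoystva}, part 4. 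The key point is that two full-dimensional cones in the tiling $C^*=\bigsqcup_{w\in W}(1-w)C^\circ$ (for finite $W$, $C^*=V$) are adjacent precisely when their closures meet along a common facet, and by Proposition~\ref{predl.ws_e><w} the neighboring cones across the wall $(1-w)H_e$ are indexed by elements $w'$ with $\ell$-relation governed by a single reflection, so $w'$ differs from $w$ by right multiplication by an element of the form $s_es_f$.

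Next I would make the adjacency condition explicit. Suppose $C_w$ and $C_{w'}$ are adjacent with common facet $F$ of codimension $1$. Pick a point $u$ in the relative interior of $F$; then $u=(1-w)v=(1-w')v'$ for suitable $v,v'\in C$ (using Corollary~\ref{sled.(h-w)A} or directly the construction in Theorem~\ref{th.Wald.}). Since $u$ lies on the wall $(1-w)H_e$ of $C_w$, the point $v$ lies on $H_e$, i.e.\ $(v,e)=0$; similarly $(v',f)=0$ for some $f\in\Pi$. Now I apply the ``Fixed point lemma'' to $g=s_u$: there is a unique element of $W$ — call it $\bar w$ — with $\bar w^{-1}s_u$ fixing a point in $C^\circ$, and the computation $(1-\bar w)(\text{that point scaled})=u$ shows $\bar w$ is the index of the open cone containing $u$ in its interior; but $u$ is on the common boundary, so I should instead identify $w$ and $w'$ as the two indices obtained by pushing $u$ slightly into $C_w^\circ$ and $C_{w'}^\circ$ respectively. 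Crossing the wall $H_e$ of $C$ corresponds to replacing $w$ by $w s_e$, and then landing $s_e v$ back in $C$ requires a further reflection $s_f$; chasing the equality $(1-w)v = (1-ws_es_f)v''$ and using Lemma~\ref{svoystva} (particularly parts 1, 2, and the formula $w^{-1}v_u=v_u+u$) turns the condition ``$v\in H_e$, $v''\in H_f$, and the two cones lie on opposite sides'' into the single sign inequality $(v_e,f)>0$.

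For the converse, given $e,f\in\Pi$ with $(v_e,f)>0$, I set $w'=ws_es_f$ and must check (a) that $w'$ is regular, and (b) that $C_w$ and $C_{w'}$ actually meet along a codimension-one face contained in $(1-w)H_e$. Claim (a) follows from Proposition~\ref{predl.ws_e><w} applied twice: $\dim\ker(1-ws_e)=\dim V^W - 1 < \dim V^W$ since $w\in\W$ is regular (its fixed space is the minimal one $V^W$, and multiplying by one reflection strictly drops the dimension — here I need $w$ regular precisely so that $ws_e$ has fixed space of dimension $\dim V^W-1$ and then $ws_es_f$ has fixed space of dimension either $\dim V^W$ or $\dim V^W-2$; the condition $(v_e,f)>0$ is exactly what forces it back up to $\dim V^W$, i.e.\ regularity, rather than down). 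For claim (b), I exhibit an explicit point on the purported common facet: take $v_e$ from Lemma~\ref{svoystva}, note $(1-w)H_e\perp v_e$ by part 4, and show that the ray through $(1-w)(\text{boundary point of }C\cap H_e)$ lies in $\overline{C_w}\cap\overline{C_{w'}}$, with the codimension being exactly $1$ because both cones are full-dimensional and the intersection is contained in the single hyperplane $(1-w)H_e$. The final identity $\codim\big((1-w)H_e\cap(1-w')H_f\big)=1$ then follows by checking that $(1-w)H_e$ and $(1-w')H_f$ are the \emph{same} hyperplane: using $w'=ws_es_f$ one computes $(1-w')H_f = (1-ws_es_f)H_f$, and since $s_f$ fixes $H_f$ pointwise this equals $(1-ws_e)H_f$; a short manipulation with the reflection formulas shows this coincides with $(1-w)H_e$ when $(v_e,f)>0$.

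The main obstacle I anticipate is bookkeeping the direction of the inequalities: translating the geometric statement ``$C_w$ and $C_{w'}$ lie on opposite sides of the shared wall'' into the algebraic sign condition $(v_e,f)>0$ requires carefully tracking how the reflections act and on which side of $H_e$ the relevant chambers sit, and the vectors $v_u$ (defined via $(w^{-1}-1)v_u=u$, note the \emph{inverse}) are easy to confuse with their images under $w$. The cleanest route is probably to reduce everything to the two-dimensional picture inside the plane spanned by $e$ and $f$ after projecting out $V^W$, where the configuration of $C_w$, $C_{w'}$, and the wall becomes a concrete planar angle computation, and then invoke Lemma~\ref{svoystva} to transport the answer back; the fact that $v_e$ spans $\ker(1-ws_e)$ (part 5) is what guarantees the reduced picture is genuinely two-dimensional rather than lower.
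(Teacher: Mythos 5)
The paper does not actually prove this theorem --- it is quoted from \cite[Theorem 2]{BiZh} and only the supporting Lemma~\ref{svoystva} and the two propositions are proved here --- so there is no in-paper argument to compare yours against; I can only judge the proposal on its own terms. As it stands it is an outline with the two central implications missing. Both directions of the equivalence terminate in phrases of the form ``chasing the equality \dots turns the condition into the single sign inequality $(v_e,f)>0$'' and ``a short manipulation \dots shows this coincides,'' and these are precisely the steps that constitute the theorem. Moreover, the one place where you do name a mechanism, it is the wrong one: you claim that Proposition~\ref{predl.ws_e><w} shows the neighbour across the wall $(1-w)H_e$ is indexed by $w'=ws_es_f$. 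That proposition only compares dimensions of fixed-point sets; it says nothing about which group element labels the adjacent cone. The Waldspurger cones $C_w=(1-w)C$ are not the chambers of the Coxeter arrangement, and their adjacency graph is not the Coxeter one, so ``crossing a wall changes $w$ by one reflection'' is not available. The actual mechanism (visible in the proof of Lemma~\ref{lemm.dim.ker}) is to take a point $u=(1-w)v=(1-w')v'$ on the common facet, compute the norm of $v-v'=wv-w'v'$ to get $\bigl((1-w^{-1}w')v',\,v\bigr)=0$, and then use that $(1-w^{-1}w')v'\in C^*$ while $v$ lies in the relative interior of the facet $C\cap H_e$ to force $(1-w^{-1}w')v'$ onto the ray $\langle e\rangle_{\geqslant 0}$; this is where the \emph{simple} normal $e$ (and subsequently $f$) enters, and none of it appears in your text.

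Two further points. First, you misattribute the role of the sign: regularity of $w'=ws_es_f$ is governed by $(v_e,f)\neq 0$ (if $(v_e,f)=0$ then $v_e\in\ker(1-ws_e)\cap H_f\subseteq\ker(1-w')$ by Lemma~\ref{svoystva}(5), so $w'$ is not regular; if $(v_e,f)\neq 0$ a short argument with Proposition~\ref{predl.ws_e><w} gives regularity), whereas the strict positivity is what encodes adjacency, i.e.\ that the two cones sit on opposite sides of the shared wall rather than overlapping --- your converse part (b) would have to produce an open codimension-one set in $\overline{C_w}\cap\overline{C_{w'}}$ and show it fails when $(v_e,f)<0$, which you do not attempt. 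Second, a slip: $C^*\neq V$ for a finite reflection group; $V=\bigsqcup(1-w)D^\circ$ is the affine (Meinrenken) statement. What you do get right, and what is genuinely the easy half, is the identification of $v_e$ as the normal to the wall $(1-w)H_e$ and the computation $(1-w')H_f=(1-ws_e)H_f=\Im(1-ws_e)=\langle v_e\rangle^{\bot}\cap\Im(1-w)=(1-w)H_e$ once $(v_e,f)\neq 0$, which yields the final codimension claim.
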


Figures~\ref{ris.sm.kon.1} and \ref{ris.sm.kon.2}  show the
transversal sections of the cones considered in the theorem.

\begin{figure}[h]
\begin{center}
\parbox{7cm}{%
\includegraphics{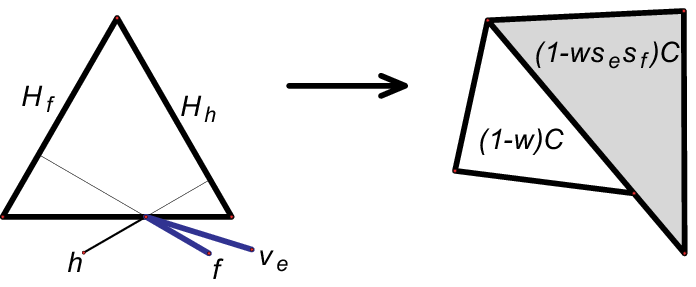}
\caption{}\label{ris.sm.kon.1}%
}\quad\quad
\parbox{9cm}{%
\includegraphics{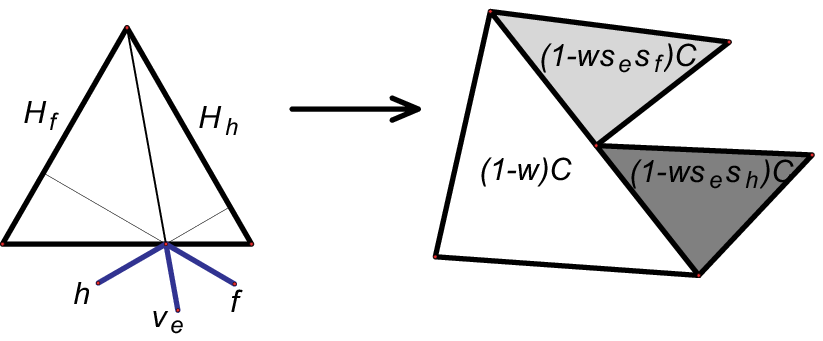}
\caption{}\label{ris.sm.kon.2}%
}
\end{center}
\end{figure}

Let us consider the case when the cone of full dimension is adjacent
to the cone of strictly smaller dimension. Let $\dim C_{w'}<\dim
C_w=n$. We say that the cone $C_{w'}$ is \emph{adjacent}  to the
cone $C_w$ if $\dim C_{w'}=\dim(C_{w'}\cap C_{w})$.

\begin{lemm}\label{lemm.dim.ker}
If $w=w'\tilde{w}\in\W$ and the cone $C_{w'}$ is adjacent to $C_w$ then we have
$\rk(1-w')+\rk(1-\tilde{w})=n$.
\end{lemm}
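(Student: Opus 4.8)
The plan is to exploit the adjacency hypothesis to produce a point that is simultaneously in the relative interior of $C_{w'}\cap C_w$ and to track what this forces on the fixed subspaces of $1-w'$ and $1-\tilde w$. First I would use the ``Fixed point lemma''~\ref{lemm.nepodv.tochka} (in the form already applied to prove Waldspurger's theorem~\ref{th.Wald.}): every $u\in C^*$ in general position is written uniquely as $(1-w)v$ for $v\in C^\circ$, so points of $C_w$ of full dimension are obtained from points of $C^\circ$, and $\dim C_w=\rk(1-w)=n$ since $w\in\W$ and $V^W=0$. The point $u\in (C_{w'}\cap C_w)^\circ$ then has $u=(1-w')v'$ with $v'\in C$ in the relative interior of the appropriate face; since $C_{w'}$ is adjacent to $C_w$, $\dim(1-w')C=\rk(1-w')$ and the image $(1-w')V$ is exactly the affine span of $C_{w'}$.

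Next I would decompose $1-w = 1 - w'\tilde w = (1-w') + w'(1-\tilde w)$. The key observation is that $\Im(1-w')$ and $\Im(w'(1-\tilde w)) = w'\Im(1-\tilde w)$ together span $\Im(1-w)=V$ (since $w\in\W$), so $\rk(1-w')+\rk(1-\tilde w)\geqslant n$. For the reverse inequality I would argue that the sum is in fact direct on the subcones under consideration: a vector in $C_{w'}\cap w'(1-\tilde w)C$ that is a nonzero element of $\Im(1-w')\cap w'\Im(1-\tilde w)$ would, via the uniqueness in the Fixed point lemma together with Lemma~\ref{lemm.Dirihle} (monotonicity of $\rho(x_0,wx)$), have to be $0$ — this is exactly the mechanism by which the decompositions $\bigsqcup_{w}(1-w)C^\circ$ are disjoint. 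So $\Im(1-w')\cap w'\Im(1-\tilde w)=0$, forcing $\rk(1-w')+\rk(1-\tilde w)\leqslant n$, hence equality.

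The main obstacle I anticipate is the passage from the \emph{disjointness of the open cones} to the \emph{transversality of the two image subspaces} in the degenerate situation $\dim C_{w'}<n$: one must make sure that the adjacency condition $\dim C_{w'}=\dim(C_{w'}\cap C_w)$ genuinely places a point $u$ in the relative interior of $C_{w'}$ whose preimage $v'$ lies in $C^\circ$ (or at least in the relative interior of the face of $C$ spanned by the walls fixed by $\tilde w$), so that the uniqueness clause of Lemma~\ref{lemm.nepodv.tochka} applies and Lemma~\ref{lemm.Dirihle} yields a strict inequality ruling out the overlap. This is where I would be most careful: I would choose $u$ generic in $(C_{w'}\cap C_w)^\circ$, pull it back along $1-w$ to $v\in C^\circ$ and along $1-w'$ to $v'$, note $(1-w)v=(1-w')v'$, and then compare $w^{-1}s_uv$ versus $w'^{-1}s_uv'$ using that $s_u$ fixes $v$ and $v'$ up to the same scalar multiple, deriving $v=v'$ and $\tilde w v=v$; the latter says $v$ lies in $\ker(1-\tilde w)$ while $v\in C^\circ$ pins down $\rk(1-\tilde w)=n-\dim C_{w'}$ and, symmetrically, $\rk(1-w')=\dim C_{w'}$, giving the claimed identity. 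Proposition~\ref{predl.ws_e><w} and Lemma~\ref{svoystva} would be used along the way to control how ranks change under multiplication by reflections if an inductive refinement is needed.
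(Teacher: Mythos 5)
Your first inequality is sound: from the identity $1-w'\tilde{w}=(1-w')+w'(1-\tilde{w})$ and $\rk(1-w)=n$ you get $\rk(1-w')+\rk(1-\tilde{w})\geqslant n$ (equivalently, $\ker(1-w')\cap\ker(1-\tilde{w})=\{0\}$, which is how the paper phrases it); note this uses only $w\in\W$, not adjacency. The genuine gap is in the reverse inequality. The transversality $\Im(1-w')\cap w'\Im(1-\tilde{w})=\{0\}$ does \emph{not} follow from the disjointness of the open cones $(1-v)C^\circ$: that disjointness constrains two particular cones, not the linear spans of their images, and you give no mechanism for passing from one to the other. Your more detailed attempt rests on the claim that a generic $u$ in the relative interior of $C_{w'}\cap C_w$ pulls back under $1-w$ to a point $v\in C^\circ$. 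This is false: since $\dim(C_{w'}\cap C_w)=\dim C_{w'}<n=\dim C_w$, such a $u$ lies on the boundary of the full-dimensional cone $C_w$, so $v=(1-w)^{-1}u$ lies on a proper face of $C$. Indeed, if you could arrange $v\in C^\circ$ with $\tilde{w}v=v$, Steinberg's fixed point theorem would force $\tilde{w}=1$ and hence $C_{w'}=C_w$, contradicting $\dim C_{w'}<n$; so the concluding step ``$v\in C^\circ$ pins down $\rk(1-\tilde{w})=n-\dim C_{w'}$'' cannot be repaired as stated.

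What the argument needs, and what the paper supplies, is quantitative control on \emph{which} face $v$ lies in. Take the minimal face $\widetilde{C}=\langle\pi_{i_1},\ldots,\pi_{i_k}\rangle_+$ of $C$ with $C_w\cap C_{w'}\subseteq(1-w)\widetilde{C}$; adjacency then gives $\rk(1-w')=\dim C_{w'}=\dim(C_{w'}\cap C_w)\leqslant k$. Next choose $v\in\widetilde{C}^\circ$ and $v'\in C^\circ$ with $(1-w)v=(1-w')v'$; the identity $v-v'=w'(\tilde{w}v-v')$ gives, after taking scalar squares, $((1-\tilde{w})v,v')=0$, and since $(1-\tilde{w})v\in C^*$ while $v'\in C^\circ$ this forces $(1-\tilde{w})v=0$ (note that $v\neq v'$ in general --- one only obtains $v-v'\in\ker(1-w')$, so your derivation of $v=v'$ is also unjustified). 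Finally, Steinberg's fixed point theorem applied to the point $v\in\widetilde{C}^\circ$ upgrades the single fixed point to $\langle\pi_{i_1},\ldots,\pi_{i_k}\rangle\subseteq\ker(1-\tilde{w})$, i.e.\ $\rk(1-\tilde{w})\leqslant n-k$; the two bounds add up to $\rk(1-w')+\rk(1-\tilde{w})\leqslant n$. The minimal-face bookkeeping and the use of Steinberg's theorem are precisely the two ingredients missing from your plan; without them, knowing that $\tilde{w}$ fixes one boundary point only yields $\dim\ker(1-\tilde{w})\geqslant 1$, which is not enough.
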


\begin{proof}
Consider the minimal set $\{\pi_{i_1},\ldots,\pi_{i_k}\}$
such that $C_w\cap C_{w'}\subseteq(1-w)\langle
\pi_{i_1},\ldots,\pi_{i_k}\rangle_+=(1-w)\widetilde{C}$. This implies
 $\dim\ker(1-w')\geqslant n-\dim(1-w)\widetilde{C}=n-k$.

There exist $v\in \widetilde{C}^\circ$ and $v'\in C^\circ$ such that
$(1-w)v=(1-w')v'\in C_{w'}$. That implies $v-v'=w'(\tilde{w}v-v')$.
Taking the scalar square of this equality after simplifications we obtain:
$((1-\tilde{w})v,v')=0$. Since $v'\in C^\circ$ and $(1-\tilde{w})v\in
C^*$ then we have $v\in\ker(1-\tilde{w})$. By Steinberg fixed point theorem
 $\langle
\pi_{i_1},\ldots,\pi_{i_k}\rangle\subseteq \ker(1-\tilde{w})$ that
 implies $\dim\ker (1-\tilde{w})\geqslant k$ and
$\dim\ker(1-w)+\dim\ker(1-\tilde{w})\geqslant (n-k)+k=n$.

For the proof of the opposite inequality it is sufficient to note
that $\ker(1-w')\cap\ker(1-\tilde{w})=\{0\}$. Indeed if $x\in
\ker(1-w')\cap\ker(1-\tilde{w})$ then we have $wx=w'(\tilde{w}x)=x$
and $x\in\ker(1-w)=\{0\}$. That implies
$\dim\ker(1-w')+\dim\ker(1-\tilde{w})\leqslant n$.
\end{proof}

Let $R$  be the set of the unite normals to the reflection
hyperplanes of group $W$. It will be convenient for us to consider
the decompositions of the elements of $W$ in the products of the
reflections $s_u$ for $u\in R$ where the reflection is not supposed
to be simple (i.e. equal to $s_e$ for $e\in\Pi$). The next theorem
of Kostant (cf.~\cite[prop.5.1]{Cart}) describes this type of
decompositions.

\begin{theorem}[Kostant]\label{th.Kostant}
Let $w=s_{u_1}s_{u_2}\ldots s_{u_k}$ where ${u_1},{u_2}\ldots u_k
\in R$. Then the following assertions are equivalent: (i) $\rk (1-w)=k$;
(ii) $w=s_{u_1}s_{u_2}\ldots s_{u_k}$ is the decomposition of minimal length;
 (iii) $u_1,{u_2}\ldots u_k$ are linearly independent.
\end{theorem}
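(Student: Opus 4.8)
The plan is to prove the cyclic implications $(ii)\Rightarrow(iii)\Rightarrow(i)\Rightarrow(ii)$, using Proposition~\ref{predl.ws_e><w} as the workhorse for the bookkeeping of dimensions of fixed-point sets. The starting observation is the elementary inclusion $\ker(1-s_{u_1}\cdots s_{u_k})\supseteq \bigcap_{j=1}^k H_{u_j} = \langle u_1,\ldots,u_k\rangle^{\perp}$, which gives for \emph{any} decomposition of length $k$ the inequality $\rk(1-w)\leqslant k$, with $\codim$ on the right side equal to $\dim\langle u_1,\ldots,u_k\rangle$. Hence $\rk(1-w)=k$ forces $\dim\langle u_1,\ldots,u_k\rangle=k$, i.e. linear independence; and conversely if the $u_j$ are independent, the reverse inequality $\rk(1-w)\geqslant k$ is what has to be shown, after which $(i)$ follows. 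So the whole content is squeezed into comparing $\rk(1-w)$ with $k$.

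First I would prove $(ii)\Rightarrow(iii)$ by contradiction: if $w=s_{u_1}\cdots s_{u_k}$ has minimal length but $u_1,\ldots,u_k$ are dependent, pick the smallest index $m$ with $u_m\in\langle u_1,\ldots,u_{m-1}\rangle$. Then the hyperplane $H_{u_m}$ contains $\bigcap_{j<m}H_{u_j}$, so $s_{u_m}$ fixes $\langle u_1,\ldots,u_{m-1}\rangle^{\perp}$ pointwise, and one can argue — conjugating $s_{u_m}$ past $s_{u_{m-1}},\ldots,s_{u_1}$, each conjugation again being a reflection in a normal lying in $\langle u_1,\ldots,u_{m-1}\rangle$ — that $s_{u_1}\cdots s_{u_m}$ is a product of $m-1$ reflections. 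This shortens the decomposition, contradicting minimality. Next, for $(iii)\Rightarrow(i)$: I would induct on $k$, applying Proposition~\ref{predl.ws_e><w} with $g=s_{u_1}\cdots s_{u_{k-1}}$ and $s=s_{u_k}$. The inductive hypothesis gives $\rk(1-g)=k-1$, i.e. $\dim\ker(1-g)=n-k+1$; the proposition says $\dim\ker(1-gs_{u_k})$ equals $n-k+1$ or $n-k+2$, and the $\leqslant$ inequality above (using $\dim\langle u_1,\ldots,u_k\rangle=k$) rules out the larger value, so $\dim\ker(1-w)=n-k$, which is $(i)$. Finally $(i)\Rightarrow(ii)$ is immediate: any decomposition has length $\geqslant\rk(1-w)$ by the inclusion above, so a decomposition realizing length $k=\rk(1-w)$ is minimal.

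The one step deserving care is $(ii)\Rightarrow(iii)$, specifically the claim that a linear dependence among the normals lets one genuinely shorten the word. The clean way is: if $u_m=\sum_{j<m}c_j u_j$, then $\bigcap_{j<m}H_{u_j}\subseteq H_{u_m}$, hence $s_{u_1}\cdots s_{u_m}$ acts trivially on $\langle u_1,\ldots,u_{m-1}\rangle^{\perp}$, so it is supported on the $(m-1)$-dimensional space $\langle u_1,\ldots,u_{m-1}\rangle$; an orthogonal transformation of an $(m-1)$-dimensional space that lies in the subgroup generated by $m$ reflections of that space and has a nontrivial... —more precisely, since its fixed space has codimension $\leqslant m-1$ it is a product of at most $m-1$ reflections (Carter's lemma / the standard fact that in $O(\mathbb{R}^d)$ every element is a product of $\dim\ker(1-\cdot)^{\perp}\leqslant d$ reflections). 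Replacing the initial segment by this shorter product yields a word for $w$ of length $\leqslant k-1$. Everything else is a routine dimension count via Proposition~\ref{predl.ws_e><w}, so this Carter-type reduction is the only genuine obstacle, and it is standard.
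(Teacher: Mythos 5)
Your cyclic scheme $(ii)\Rightarrow(iii)\Rightarrow(i)\Rightarrow(ii)$ places the entire weight of the theorem on $(ii)\Rightarrow(iii)$, and that is exactly where the argument breaks. When you shorten $s_{u_1}\cdots s_{u_m}$ by viewing it as an orthogonal transformation of the $(m-1)$-dimensional space $\langle u_1,\ldots,u_{m-1}\rangle$ and invoking Cartan--Dieudonn\'e, the $m-1$ reflections you obtain are reflections in arbitrary hyperplanes of that space; nothing guarantees their normals lie in $R$. The theorem counts decompositions into reflections \emph{of the group $W$}, so your shortened word does not contradict minimality. The alternative you name, ``Carter's lemma,'' is essentially the equivalence $(i)\Leftrightarrow(ii)$ you are trying to prove, so appealing to it is circular. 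The paper sidesteps this issue entirely: it never proves $(ii)\Rightarrow(iii)$ directly, but instead proves $(ii)\Rightarrow(i)$ by induction on $\dim V$, using the dichotomy $w\in\W$ or $w\notin\W$ together with Proposition~\ref{predl.ws_e><w} (if $w\in\W$, then $ws_u\notin\W$ for any $u\in R$, so the inductive construction of a short word applies to $ws_u$ and one reflection is appended), and then obtains $(iii)$ as a consequence of $(i)$ via the codimension count $\rk(1-w)\leqslant\codim\bigcap_j H_{u_j}=\dim\langle u_1,\ldots,u_k\rangle$.

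There is a second gap in your $(iii)\Rightarrow(i)$ step. With $g=s_{u_1}\cdots s_{u_{k-1}}$ and $\dim\ker(1-g)=n-k+1$ from the inductive hypothesis, Proposition~\ref{predl.ws_e><w} gives $\dim\ker(1-w)\in\{n-k,\,n-k+2\}$, while the inclusion $\ker(1-w)\supseteq\bigcap_j H_{u_j}$ gives $\dim\ker(1-w)\geqslant n-k$ --- an inequality satisfied by \emph{both} candidate values, so it does not ``rule out the larger value'' as you claim. To exclude $n-k+2$ you need something extra: either the paper's explicit computation that each $u_i$ lies in $\Im(1-w)$ (choose $x_i\in(H_{u_{i+1}}\cap\ldots\cap H_{u_k})\setminus H_{u_i}$ and expand $(1-w)x_i$ triangularly in $u_1,\ldots,u_i$), which yields $\rk(1-w)\geqslant k$ directly and makes the parity argument unnecessary; or the observation that $\ker(1-w)\cap H_{u_k}=\ker(1-g)\cap H_{u_k}=\langle u_1,\ldots,u_k\rangle^{\perp}$ has dimension exactly $n-k$, which is incompatible with $\dim\ker(1-w)=n-k+2$ since intersecting with a hyperplane drops dimension by at most one. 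As written, both of the substantive implications in your plan are unproved.
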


\begin{proof}
(i)$\Rightarrow$(ii) and (i)$\Rightarrow$(iii). Since $\ker
(1-w)\supseteq H_{u_1}\cap H_{u_2}\cap\ldots \cap H_{u_k}$ we have
$k\geqslant\rk (1-w)$. In case of linear dependence of
$u_1,{u_2}\ldots u_k$ the inequality is strict.

(ii)$\Rightarrow$(i). Arguing by the induction on $\dim V$ let us show that there
exists a decomposition of length not bigger than $\rk (1-w)$.
 We may assume that the theorem is proved for all $w\notin \W$. Indeed let $V^w\neq 0$
 be the subspace of $w$-fixed vectors, we may apply the induction step to the subspace
  $(V^w)^{\bot}$ and the subgroup $W_{V^w}\subset W$ that fixes $V^w$ pointwise.

 Let $w\in \W$. Let us choose an arbitrary vector $u\in R$. Since  $w\in \W$ from Proposition~\ref{predl.ws_e><w}
we obtain $\dim \ker(1-ws_u)>0$ (this also follows from Lemma~\ref{svoystva}, 3)).
 Thus we obtain $ws_u\notin \W$  this assumes that the claim
  is proved for such elements, we are finished.

(iii)$\Rightarrow$(i). Assume that  $u_1,{u_2}\ldots u_k$ are linearly
independent. Let us choose $x_1 \in (H_{u_2}\cap\ldots \cap
H_{u_k})\setminus H_{u_1}$. Since
$(1-w)x_1=\frac{2(x_1,u_1)}{(u_1,u_1)}u_1$ we have $u_1\in (1-w)V$.
Taking $x_2\in (H_{u_3}\cap\ldots \cap H_{u_k})\setminus
H_{u_2}$ we obtain
$(1-w)x_2=c_{21}u_1+\frac{2(x_2,u_2)}{(u_2,u_2)}u_2$ that implies
$u_1,u_2\in (1-w)V$. Taking $x_i\in (H_{u_{i+1}}\cap\ldots \cap
H_{u_k})\setminus H_{u_{i}}$ we get
$(1-w)x_i=(1-s_{u_1}\ldots
s_{u_i})x_i=c_{i1}u_1+c_{i2}u_2+\ldots+c_{i-1i}u_{i-1}+\frac{2(x_i,u_i)}{(u_i,u_i)}u_i$,
откуда $u_i\in(1-w)V$. The latter implies $\dim (1-w)V\geqslant k$ and finishes the proof.
\end{proof}

The decomposition of $w$ into the product of reflections that
satisfy the conditions of the Kostant theorem is called
\emph{minimal}.

\begin{theorem}
Suppose we are given $w=w'\tilde{w}\in\W$ such that $\rk(1-w')=k$ and the decomposition $w'=s_{u_1}\ldots
s_{u_k}$ is minimal. The cone  $C_{w'}$ is adjacent to $C_{w}$
iff $\tilde{w}=s_{u_{k+1}}\ldots
s_{u_n}$ where $u_1$, \ldots, $u_n$  are linearly independent and
$u_{k+1},\ldots u_n\in  \widetilde{R}=R\cap \Im(1-\tilde{w})$. In particular
if the cone $C_w$ is adjacent to $C_{w'}$ then
$\tilde{w}\in\widetilde{W}_{\mathrm{reg}}$ where $\widetilde{W}$
is a reflection group generated by the reflections $s_u$ for $u\in
\widetilde{R}$.
\end{theorem}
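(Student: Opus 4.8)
The plan is to prove the biconditional by combining the rank identity of Lemma~\ref{lemm.dim.ker} with the Kostant theorem~\ref{th.Kostant} and Steinberg's fixed point theorem. Throughout, write $V^{\tilde w}=\ker(1-\tilde w)$ and $U=\Im(1-\tilde w)=(V^{\tilde w})^{\bot}$, so that $\widetilde R=R\cap U$ and $\widetilde W=W_U$ is the subgroup generated by the $s_u$, $u\in\widetilde R$; note $\tilde w$ preserves $U$ and acts there as a transformation with no nonzero fixed vector.

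First I would treat the forward direction. Assume $C_{w'}$ is adjacent to $C_w$. By Lemma~\ref{lemm.dim.ker} we have $\rk(1-w')+\rk(1-\tilde w)=n$, so with $\rk(1-w')=k$ we get $\rk(1-\tilde w)=n-k$, i.e. $\dim U=n-k$. From the inspection of the proof of Lemma~\ref{lemm.dim.ker} (the part showing $\langle\pi_{i_1},\dots,\pi_{i_k}\rangle\subseteq\ker(1-\tilde w)$, combined with the reverse dimension count $\ker(1-w')\cap\ker(1-\tilde w)=\{0\}$) one extracts moreover that $\Im(1-w')$ and $U=\Im(1-\tilde w)$ are complementary: indeed $\ker(1-w')\supseteq V^{\tilde w}$ forces, by dimensions, $\ker(1-w')=V^{\tilde w}$ up to the fact that both have dimension $n-k$ and $k$ respectively — here I need to be a little careful, so let me instead argue directly: $\ker(1-w')\cap\ker(1-\tilde w)=\{0\}$ and $\dim\ker(1-w')=n-k$, $\dim\ker(1-\tilde w)=n-\rk(1-\tilde w)$; the identity $\rk(1-w')+\rk(1-\tilde w)=n$ is exactly what makes these two kernels complementary, hence $\Im(1-\tilde w)\oplus\ker(1-w')=V$ and dually $\Im(1-w')\oplus\ker(1-\tilde w)=V$. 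Now apply Kostant~\ref{th.Kostant}(ii)$\Leftrightarrow$(iii) to $\tilde w$: choose any minimal decomposition $\tilde w=s_{u_{k+1}}\cdots s_{u_n}$ with $u_{k+1},\dots,u_n$ linearly independent; since $\rk(1-\tilde w)=n-k$, the number of factors is $n-k$ as required, and each $H_{u_j}\supseteq\ker(1-\tilde w)=V^{\tilde w}$ gives $u_j\in U=\widetilde R$. It remains to see that the full list $u_1,\dots,u_n$ is linearly independent: the span of $u_1,\dots,u_k$ lies in $\Im(1-w')$ (Kostant's proof gives $u_i\in(1-w')V$) and the span of $u_{k+1},\dots,u_n$ lies in $\Im(1-\tilde w)=U$, and these two images are complementary by the previous paragraph; since each block is independent of size $k$ and $n-k$, the union is independent. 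This gives the stated form of $\tilde w$, and in particular $\tilde w=s_{u_{k+1}}\cdots s_{u_n}$ with $u_j\in\widetilde R$ linearly independent shows $\tilde w\in\widetilde W_{\mathrm{reg}}$ by Kostant applied inside $\widetilde W$ acting on $U$ (the fixed space of $\tilde w$ in $U$ is zero).

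For the converse, suppose $\tilde w=s_{u_{k+1}}\cdots s_{u_n}$ with $u_1,\dots,u_n$ linearly independent and $u_{k+1},\dots,u_n\in\widetilde R$, and $w'=s_{u_1}\cdots s_{u_k}$ minimal. Then $u_1,\dots,u_n$ being independent makes $w=s_{u_1}\cdots s_{u_n}$ a minimal decomposition, so $\rk(1-w)=n$, confirming $w\in\W$; also $\rk(1-w')=k$ and $\rk(1-\tilde w)=n-k$, so $\rk(1-w')+\rk(1-\tilde w)=n$. To show adjacency I would run the chain of equalities in the proof of Lemma~\ref{lemm.dim.ker} in reverse: it suffices to produce $v\in\widetilde C^{\circ}$ and $v'\in C^{\circ}$ with $(1-w)v=(1-w')v'$, where $\widetilde C=\langle\pi_{i_1},\dots,\pi_{i_k}\rangle_+$ for the appropriate index set. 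The natural candidate subspace is $\ker(1-\tilde w)=V^{\tilde w}$, which is spanned by a subset of the $\pi_e$'s precisely when... — here one uses Steinberg's theorem to identify $V^{\tilde w}$ as an intersection of walls and hence $\widetilde C$ as the corresponding face of $C$. Then for $v\in V^{\tilde w}$ one has $\tilde w v=v$, hence $(1-w)v=(1-w'\tilde w)v=(1-w')v$, so one only needs $v\in(V^{\tilde w})^{\circ}$ relative interior with $(1-w')v\in C_{w'}\cap C_w$; using Theorem~\ref{th.Wald.}/\ref{th.Mein.}-type surjectivity of $(1-w')$ onto $C^*$ one can arrange $(1-w')v'$ with $v'\in C^{\circ}$ to equal $(1-w')v$ after rescaling, and then $\dim(C_{w'}\cap C_w)\geq\dim\widetilde C=k=\dim C_{w'}$. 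Finally, the codimension-of-intersection bookkeeping from the earlier adjacency theorem pins down that this inequality is an equality.

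The main obstacle I anticipate is the converse direction's geometric step: showing that $V^{\tilde w}$ is spanned by coordinate vectors $\pi_e$ (equivalently, is a face of the chamber $C$) and that $(1-w)$ and $(1-w')$ agree on it with image actually meeting $C_{w'}$ in its full dimension. This is where Steinberg's fixed point theorem is essential — it guarantees that the pointwise stabilizer of $V^{\tilde w}$ is generated by the reflections it contains, so $\widetilde W=W_U$ is itself a reflection group with $\widetilde R$ as its root normals and $\tilde w$ regular in it — and one must check the relative-interior/genericity conditions carefully rather than just the dimension count. The forward direction, by contrast, is essentially a careful reading of the already-established Lemma~\ref{lemm.dim.ker} together with the linear-independence transfer via Kostant, which I expect to be routine.
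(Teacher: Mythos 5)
Your forward direction is sound and essentially the paper's argument: Lemma~\ref{lemm.dim.ker} gives $\rk(1-\tilde w)=n-k$, Kostant's theorem produces a minimal decomposition $\tilde w=s_{u_{k+1}}\cdots s_{u_n}$, and the linear independence of the full list $u_1,\dots,u_n$ follows (the paper gets it even more directly by applying Kostant's (i)$\Rightarrow$(iii) to the length-$n$ decomposition of $w$, since $\rk(1-w)=n$; your complementarity-of-images argument also works). The converse, however, has a genuine gap exactly where you flagged it, and it cannot be closed along the lines you sketch. The crux is to show that $\ker(1-\tilde w)\cap C$ has dimension $k$, so that the identity $(1-w)x=(1-w')x$ for $x\in\ker(1-\tilde w)$ — which you do have — produces a $k$-dimensional subset of $C_w\cap C_{w'}$. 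You leave this step unfinished (``is spanned by a subset of the $\pi_e$'s precisely when\dots''), and the detour through Waldspurger/Meinrenken-type surjectivity of $(1-w')$ is off-target: $(1-w')$ for a non-regular $w'$ does not map $C^\circ$ onto $C^*$, and producing a single pair $(v,v')$ with $(1-w)v=(1-w')v'$ does not give $\dim(C_w\cap C_{w'})=k$ anyway.

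Worse, the way you discharge the hypothesis ``$u_j\in\widetilde R$'' makes the converse false. You read $\widetilde R=R\cap\Im(1-\tilde w)$ literally, and correctly observe that every root in a minimal decomposition of $\tilde w$ lies in $\Im(1-\tilde w)=\ker(1-\tilde w)^{\bot}$ automatically — so on your reading the condition is vacuous and the converse reduces to ``$\rk(1-w')+\rk(1-\tilde w)=n$ implies adjacency.'' That is not true: in $W=B_2$ with $C=\{x_1\geqslant x_2\geqslant 0\}$, take $w=-1$, $w'=s_{(0,1)}$, $\tilde w=s_{(1,0)}$; then $(0,1)$ and $(1,0)$ are independent, $(1,0)\in R\cap\Im(1-\tilde w)$, yet $C_{w'}=\mathbb{R}_{\geqslant 0}(0,1)$ meets $C_w=C$ only at the origin. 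The point is that $\ker(1-\tilde w)=\mathbb{R}(0,1)$ is not spanned by a face of $C$. The paper's proof of the converse uses the hypothesis $u_j\in\widetilde R$ precisely to conclude $\widetilde C\subset H_{u_{k+1}}\cap\cdots\cap H_{u_n}$ for a $k$-dimensional face $\widetilde C$ of $C$, whence $\dim(H_{u_{k+1}}\cap\cdots\cap H_{u_n}\cap C)=k$ and the image of this set under $1-w'=(1-w)|_{\ker(1-\tilde w)}$ is a $k$-dimensional subset of $C_{w'}\cap C_w$. So the substantive content of ``$u_j\in\widetilde R$'' is that the walls $H_{u_j}$ all contain a common $k$-dimensional face of the chamber, and any correct proof of the converse must use that geometric fact rather than the purely linear-algebraic membership in $\Im(1-\tilde w)$; your Steinberg-based plan gestures at this but never establishes it.
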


\begin{proof}
By Lemma~\ref{lemm.dim.ker} we have:
$\rk(1-\tilde{w})=n-\rk(1-w')=n-k$. Applying  the Kostant theorem we
can find the elements
 $u_{k+1}$, \ldots, $u_{n}$ such that
$\tilde{w}=s_{u_{k+1}}\ldots s_{u_n}$ and the elements $u_1$, \ldots, $u_n$
are linearly independent.

Let $v\in C$ and $v'\in C^\circ$ be the vectors that satisfy
$(1-w)v=(1-w')v'$. From Theorem~\ref{th.Kostant} (iii) it follows
that $v\in\ker(1-\tilde{w})=H_{u_{k+1}}\cap\ldots\cap H_{u_n}$. From
the condition
 $\dim C_{w'}=\dim(C_w\cap C_{w'})$ and the proof of Lemma~\ref{lemm.dim.ker}
 it follows that $\widetilde{C}\subset H_{u_i}$
for all $i=k+1,\ldots, n$. That implies $u_{k+1},\ldots u_n\in
\widetilde{R}$ and proves the claim.

In opposite direction, if $\tilde{w}=s_{u_{k+1}}\ldots s_{u_n}$,
where $u_1$, \ldots, $u_n$ are linearly independent,  then by
theorem of Kostant $\rk(1-w)=n$. Since $u_{k+1},\ldots u_n\in
\widetilde{R}$, we have $\widetilde{C}\subset H_{u_{k+1}}\cap\ldots
\cap H_{u_n}$ and $\dim (H_{u_{k+1}}\cap\ldots \cap H_{u_n}\cap
C)=k$. For every $x\in H_{u_{k+1}}\cap\ldots H_{u_n}\cap C$ we get
$(1-w)x=x-w'(s_{u_{k+1}}\ldots s_{u_n}x)=(1-w')x$ that implies $\dim
C_{w'}=\dim(C_{w'}\cap C_{w})$.
\end{proof}


e-mails: tsdtp4u@proc.ru, zhgoon@mail.ru

\end{document}